\documentclass[11pt]{amsart}
\usepackage{palatino}
\usepackage{amsmath, amsthm}
\usepackage{amssymb, graphicx, epsfig}

\usepackage[margin=1.5cm,
  vmargin={0pt,1cm},
  nohead]{geometry}
\setlength{\voffset}{-1in} \setlength{\marginparsep}{0in}
\setlength{\marginparwidth}{0in} \setlength{\marginparpush}{0in}
\setlength{\hoffset}{-1in} \setlength{\oddsidemargin}{3.6cm}
\setlength{\evensidemargin}{3.6cm} \setlength{\textwidth}{14.4cm}
\setlength{\topmargin}{2cm} \setlength{\headheight}{15pt}
\setlength{\headsep}{16pt} \setlength{\textheight}{22cm}
\setlength{\footskip}{18pt}

\newtheorem{theorem}{$\quad$Theorem}[section]

\newtheorem{lemma}[theorem]{$\quad$Lemma}

\newtheorem{definition}[theorem]{$\quad$Definition}

\newcommand{\R}{{\mathbb R}}
\newcommand{\C}{{\mathbb C}}
\newcommand{\Z}{{\mathbb Z}}

\newtheorem*{theoremstar}{$\quad$Theorem}
\theoremstyle{definition}
\newtheorem{remark}[theorem]{$\quad$Remark}

\newenvironment{example}{{\bf Example. }}{}

\newcounter{bibno}


\begin{document}
\title{Encomplexed Brown Invariant of Real Algebraic Surfaces in $\R P^3$}
\author{Johan Bj\"orklund}
\date{2011}

\begin{abstract}We construct an invariant of parametrized generic real algebraic surfaces in $\R P^3$ which generalizes the Brown invariant of immersed surfaces from smooth topology. The invariant is constructed using the self intersection, which is a real algebraic curve with points of three local characters: the intersection of two real sheets, the intersection of two complex conjugate sheets or a Whitney umbrella. In \cite{KIME} the Brown invariant was expressed through a self linking number of the self intersection. We extend the definition of this self linking number to the case of parametrized generic real algebraic surfaces.
\end{abstract}

\maketitle

\clearpage

\section{Introduction}

Following the philosophy of Viro in \cite{VIRO2} we are interested in encomplexing invariants from smooth topology to construct invariants in real algebraic geometry. In \cite{VIRO2} the self linking number of a curve was encomplexed. Another example, immersions of curves in the plane have been extensively studied. Two immersions of curves lie in the same component of the space of immersions if and only if they have the same Whitney index. The Whitney index can be calculated from the self intersections in the case of a generic immersed curve. It turns out that this notion survives to (parametrized) real algebraic curves of Type I, where a corresponding encomplexed Whitney index can be calculated from self intersections (both solitary and non solitary) as proved by Viro \cite{VIRO}. 

In this paper we study a similar situation concerning the space of generically immersed oriented surfaces in $\R^3$. The Brown invariant is an invariant up to regular homotopy of immersed surfaces in $\R^3$. The Brown invariant of an immersed surface can be defined using the self intersection of the surface as has been shown by Kirby and Melvin \cite{KIME}. In their article they express the Brown invariant by constructing a curve called the ``pushoff`` close to the self intersection with a natural projection to the self intersection with $4$ points in the preimage of each point in the self intersection. The linking number between this pushoff and the self intersection is shown to give the Brown invariant.

In this paper we will encomplex the Brown invariant, using the interpretation in \cite{KIME} as a self linking number of the self intersection.
Let $M_S$ denote the space of real algebraic mappings from some smooth projective real algebraic surface $S$ into $\R P^3$. We also define two discriminants, $\sigma$ and $\gamma$. The discriminant $\sigma$ consists of those points in $M_S$ such that the corresponding parametrized surface in $\R P^3$ has topologically unstable singularities. The discriminant $\gamma$ consists consists of those points in $M_S$ such that the corresponding parametrized surface has points in its self intersection where the corresponding quadratic form in the the normal bundle of the self intersection curve has a matrix which is a nonzero multiple of the identity matrix. The discriminant $\gamma$ is dependent on the metric chosen for $\R P^3, \C P^3$. We construct an invariant called the fourfold pushoff invariant, defined on points in of $M_S\setminus(\sigma\cup\gamma)$, in section \ref{4foldpush}.

\begin{theorem}
The fourfold pushoff invariant is constant on connected components of $M_S\setminus (\sigma\cup\gamma)$.
\end{theorem}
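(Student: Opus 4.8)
The plan is to reduce the statement to a local-constancy assertion. Since the fourfold pushoff invariant is defined as a linking number it takes values in a discrete group, and any locally constant function on a topological space is automatically constant on each connected component. Thus it suffices to fix a map $f \in M_S \setminus (\sigma \cup \gamma)$ and show that every sufficiently small perturbation $g$ of $f$ that still lies in $M_S \setminus (\sigma \cup \gamma)$ carries the same value. Equivalently, I would join two maps in the same component by a generic path and verify that the value is unchanged along it, the point being that such a path can be taken to meet neither $\sigma$ nor $\gamma$.

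The core of the argument is that every ingredient of the construction varies continuously, indeed by an ambient isotopy, so long as the path avoids the two discriminants. Away from $\sigma$ the parametrized surface has only topologically stable singularities, namely transverse double curves and Whitney umbrellas, so the self intersection together with its stratification into the three local characters (two real sheets, two complex conjugate sheets, a Whitney umbrella) deforms by an ambient isotopy of $\R P^3$, respectively of the relevant piece of $\C P^3$. Away from $\gamma$ the quadratic form in the normal bundle of the self intersection is never a nonzero multiple of the identity, so its eigendirections, and hence the four sheets of the pushoff together with the framing used to separate them, are well defined and vary continuously. Consequently the pushoff curve and its natural fourfold projection to the self intersection persist under the deformation.

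With both curves deforming continuously and the pushoff remaining disjoint from the self intersection throughout (this disjointness being built into the construction), I would invoke the isotopy invariance of the linking number: a continuous family of pairs of disjoint curves has constant linking number. This yields local constancy of the fourfold pushoff invariant and hence the theorem.

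The main obstacle will be controlling the pushoff near the two exceptional types of points on the self intersection, the complex conjugate points and the Whitney umbrellas. At a Whitney umbrella the self intersection curve terminates, and one must check that the four strands of the pushoff close up continuously and that their contribution to the linking number is stable under the deformation; at a complex point the pushoff direction is dictated by the complex structure, so continuity must be verified in $\C P^3$ rather than in $\R P^3$. It is precisely here that the metric-dependent discriminant $\gamma$ enters, and the delicate step is to confirm that $\gamma$ is the only wall beyond $\sigma$ at which the eigendirection data can fail, and that within a fixed component the assembled invariant is independent of the auxiliary choices (orientation conventions and the chosen framing) used to define the fourfold pushoff.
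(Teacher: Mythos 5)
Your proposal is correct and follows essentially the same route as the paper, whose entire proof reads ``Obvious, since the pushoff varies continuously and is well defined on each component'': you simply spell out the continuity-plus-discreteness argument (locally constant linking number of a continuously varying disjoint pair of curves) that the author leaves implicit. The extra care you take at Whitney umbrellas and solitary points is a welcome elaboration rather than a deviation.
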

We also show that in the case of the real algebraic surface being an immersed surface without solitary self intersections the Brown invariant coincides with the fourfold pushoff invariant in Remark \ref{coincide}.
\begin{theorem}
Counted mod $8$, the fourfold pushoff invariant is constant on connected components of $M_S\setminus\gamma$.
\end{theorem}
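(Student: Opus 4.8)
The plan is to deduce this statement from the preceding theorem by a wall-crossing analysis. The connected components of $M_S\setminus\gamma$ are obtained from those of $M_S\setminus(\sigma\cup\gamma)$ by gluing across the codimension one strata of $\sigma$, so it suffices to show that crossing $\sigma$ transversally, while avoiding $\gamma$, changes the fourfold pushoff invariant by a multiple of $8$.

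First I would reduce to a local model. Given two maps $f_0,f_1$ in the same component of $M_S\setminus\gamma$, I would join them by a generic path $\{f_t\}$ lying in $M_S\setminus\gamma$. By transversality of the path to the stratified discriminant $\sigma$, the path meets $\sigma$ at only finitely many parameters $t_1<\dots<t_k$, at each of which it crosses the top dimensional stratum of $\sigma$ transversally and avoids both $\gamma$ and the higher codimension strata of $\sigma$. On each open subinterval the map lies in $M_S\setminus(\sigma\cup\gamma)$, so by the preceding theorem the invariant is locally constant there; hence the total change of the invariant along the path is the sum of the jumps incurred at $t_1,\dots,t_k$.

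Next I would classify the codimension one strata of $\sigma$ and fix a normal form for each. These are the generic topologically unstable singularities of a parametrized surface in $\R P^3$: the birth or death of a pair of Whitney umbrellas, the passage through a degenerate triple point, and a tangency along the self-intersection curve, together with the complex conjugate (solitary) analogues of each that arise in the encomplexed setting. For each such transition I would compute the change in the pushoff linking number directly from the local sheet structure.

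The heart of the argument, and the step I expect to be the main obstacle, is showing that each of these local contributions is divisible by $8$. Because the pushoff has four points in the preimage of every point of the self-intersection and is built symmetrically from the local branches, the linking-number jump across a wall is forced into a multiple of $8$; this is the real algebraic counterpart of the fact that the smooth Brown invariant is the mod $8$ reduction of the Kirby--Melvin self-linking number \cite{KIME}. The new cases without a smooth-topology model are the transitions involving Whitney umbrellas and the solitary complex conjugate double points, and verifying divisibility by $8$ for these is where the computation is most delicate. Once each jump is shown to be $\equiv 0\pmod 8$, summing over $t_1,\dots,t_k$ shows the total change is $\equiv 0\pmod 8$, so the invariant is constant mod $8$ on the component, completing the proof.
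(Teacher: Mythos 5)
Your overall strategy is exactly the one the paper uses: join two maps by a generic path in $M_S\setminus\gamma$, make it transversal to the top-dimensional strata of $\sigma$, invoke Theorem 1.1 on the complementary subintervals, and reduce to showing that each wall crossing changes the invariant by a multiple of $8$. You also correctly identify the list of walls as the codimension-one unstable singularities (quadruple points, degenerate triple points, tangencies, umbrella collisions) together with their solitary/complex-conjugate variants, which is the content of Theorem \ref{codim} and its lemmas. Up to that point the proposal matches the paper.

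The gap is that the case-by-case verification, which is the entire mathematical substance of the theorem, is not carried out; in its place you offer the heuristic that ``the fourfold symmetry forces the jump into a multiple of $8$,'' and that heuristic is not sufficient. It genuinely explains the strata where two arcs of the self-intersection pass through one another (types $Q$, $Q'$, $Q''$) or where immersed circles merge or split (type $H$): there the change comes from cross-terms of the form $L(C_i,P(C_j))+L(C_j,P(C_i))=8\,\mathrm{lk}(C_i,C_j)$ when the arcs belong to the same immersed circle, and is $0$ when they do not, since the invariant only sums self-linkings of each circle with its own pushoff. But for the remaining strata --- elliptic tangencies $E,E'$ creating a small new circle, the triple-point births/deaths $T,T'$, the Whitney-umbrella collisions $B,K$, and the umbrella-through-plane passage $C$ --- fourfold symmetry alone does not rule out a jump of, say, $4$; one must check directly in the local model that the newborn circle has invariant $0$, or that the pushoff is unchanged outside a small ball and unlinked inside it, which is what the paper does (with figures) for each type. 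You acknowledge that this is ``where the computation is most delicate,'' but deferring it means the proof is a correct outline rather than a proof: the theorem is precisely the assertion that all of these local contributions vanish mod $8$, and that assertion is only established by performing the local computations.
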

The corresponding smooth situation, immersed surfaces up to regular homotopy, has been studied by Goryuonov in \cite{GORY} where he describes the space of Vasiliev invariants for this situation.

For proofs see Section \ref{4foldpush}.

\section{Preliminaries}\label{prel}

Let $S$ be a smooth projective real algebraic surface together with its complexification $\C S$. We consider the space $M_S^d$ of real algebraic maps of $S$ into $\R P^3$ of degree $d$. We also equip $\R P^3$ and $\C P^3$ with two Riemannian metrics. The metrics are the metric inherited from $S^3$ and the Fubini-Study metric respectively. In Subsection \ref{mdproj} we examine the singularities of the parametrized surfaces closer. In Subsection \ref{gammasub} we examine the self intersections of the parametrized surfaces and show how to use the quadratic forms in the normal bundle of such a self intersection curve to construct the pushoff, a curve lying very close to the self intersection with a fourfold projection to the self intersection.

\subsection{Singularities in the space $M_S^d$.}\label{mdproj}
\begin{lemma}\label{projection}
The space $M_S^d$ can is diffeomorphic to the complement $C_S^d$ of a co-dimension 2 subspace of the space of projections from $P^m$ to $P^3$ for some $m$.
\end{lemma}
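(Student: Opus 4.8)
The plan is to realize a degree-$d$ map concretely in coordinates and recognize it as a linear projection of one fixed auxiliary morphism of $S$. Fix the line bundle $L=\mathcal O_S(d)$ underlying degree-$d$ maps (the $d$-th power of the polarization of $S$) and choose a basis $t_0,\dots,t_m$ of the real vector space $H^0(S,L)$, where $m+1=\dim H^0(S,L)$; I assume $d$ lies in the range where $L$ is base-point-free, which holds for $d$ large. The complete linear system then defines a morphism $\iota\colon S\to P^m$, $p\mapsto[t_0(p):\cdots:t_m(p)]$, whose image $\iota(S)$ is a real algebraic surface of real dimension $2$. A degree-$d$ map $\varphi=[s_0:s_1:s_2:s_3]\colon S\to \R P^3$ is given by four sections $s_i\in H^0(S,L)$, and writing $s_i=\sum_j a_{ij}t_j$ expresses $\varphi=\pi_A\circ\iota$, where $\pi_A\colon P^m\dashrightarrow P^3$ is the linear projection with matrix $A=(a_{ij})$.

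Next I would set up the correspondence precisely. Since $\varphi$ is determined by its sections up to a common real scalar and the $t_j$ form a basis, the assignment $\varphi\mapsto[A]$ is a well-defined projective-linear isomorphism from the space of section-tuples onto $\mathbb P(\mathrm{Mat}_{4\times(m+1)}(\R))$, which I take to be the space of projections from $P^m$ to $P^3$ (on its rank-$4$ locus of genuine projections). The key observation is that $\varphi$ is everywhere defined, rather than merely rational, if and only if the four sections have no common zero, i.e. if and only if the projection center $\mathbb P(\ker A)$, a $(m-4)$-plane, is disjoint from $\iota(S)$. Hence $M_S^d$ is carried bijectively onto the set $C_S^d$ of projections whose center avoids $\iota(S)$; because the bijection is induced by a linear isomorphism of the ambient coordinate spaces, it is automatically a diffeomorphism onto its image, and it remains only to identify the removed locus $\Sigma$ and compute its codimension.

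For the codimension count I would work on the Grassmannian $G$ of centers and transfer the result to the space of projections (the two differ by a $PGL_4$-fibration, so codimensions agree). Here $\dim_\R G=4(m-3)$. Consider the incidence variety $I=\{(c,p):p\in\iota(S),\ p\in c\}$, which fibers over $\iota(S)$ with fiber over $p$ the sub-Grassmannian of centers through $p$, of dimension $4(m-4)$. Thus $\dim_\R I=4(m-4)+2$, and since the projection $I\to\Sigma$ is generically one-to-one (a generic center meeting the surface meets it in a single point), $\dim_\R\Sigma=4(m-4)+2$, giving codimension $4(m-3)-\bigl(4(m-4)+2\bigr)=2$ in $G$, hence in the space of projections.

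The main obstacle I anticipate is making the codimension assertion rigorous over $\R$: the incidence count yields codimension $\geq 2$ for free, since $\Sigma$ is the image of the smooth manifold $I$, but proving equality requires exhibiting real centers meeting $\iota(S)$ transversally in a single point and checking that the incidence projection is generically finite, where real-algebraic phenomena (as opposed to the complex count) could intervene. A secondary technical point is justifying base-point-freeness and $\dim_\R\iota(S)=2$ for the relevant $d$, and confirming that restricting to the rank-$4$ locus of the matrix space discards only higher-codimension strata, so that the codimension-$2$ description of the complement is unaffected.
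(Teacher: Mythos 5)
Your proposal is correct and follows essentially the same route as the paper: both factor an arbitrary degree-$d$ map through a fixed degree-$d$ embedding of $S$ into $P^m$ (the paper uses the Veronese re-embedding of the ambient $P^n$, you use the complete linear system of $\mathcal{O}_S(d)$) followed by a linear projection, and both identify the excluded locus as the projections whose center meets the image surface, a codimension-$2$ condition. Your incidence-variety count just makes explicit what the paper disposes of in a one-line remark, and since the intended application only needs generic paths to avoid this locus, the bound codimension $\geq 2$ suffices, so the ``obstacle'' you flag about proving exact equality is harmless.
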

\begin{proof}
Since $S$ is a projective variety, we naturally have that $S\subset P^n$ for some $n$. The ring of regular functions of degree $d$ on $S$ is then generated by the monomials of degree $d$ on $P^n$.  Consider the Veronese embedding $v_d$ from $P^n$ into $P^m$. It is clear that any map $f$ of degree $d$ from $S$ to $P^3$ can be uniquely factorized as $f=\pi_h\circ v_d$ where $\pi_h$ is the projection to some 3-space $h$, since they can be represented as linear combinations of monomials from the ring of regular functions on $P^n$ in each coordinate. 
\end{proof}
\begin{remark}
Locally, the space of projections from $P^m$ to $P^3$ is naturally diffeomorphic to the projections from $A^m$ to $A^3$. Thus, a generic path in $M_S^d$ can by a compactness argument be reduced to pieces which can be considered to be paths in space of projections from $A^m$ to $A^3$, that is, a paths in $G(3,m)$.
\end{remark}
\begin{remark}
All projections do not correspond to points on $M_S^d$  since we cannot let the $m-4$-plane we project from intersect $S\subset P^m$. However, since $S$ is of dimension $2$, this is at most a co-dimension $2$ condition, so any generic path of projections can be assumed to not intersect this subvariety. 
\end{remark}

\begin{theorem}\label{codim}
The space $M_S^d$ contains a subvariety $\sigma$ of co-dimension $1$ such that outside of this co-dimension $1$ hypersurface, the only possible topological local configurations of self-intersections in the image of $f\in M_S^d\setminus\sigma$ are:
\begin{itemize}
 \item Two real sheets intersecting transversally
 \item Three real sheets intersecting pairwise transversally
 \item Two complex conjugate sheets intersecting transversally
 \item Two complex conjugate and one real sheet intersecting pairwise transversally
 \item A Whitney umbrella
\end{itemize}
\end{theorem}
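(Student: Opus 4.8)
The plan is to invoke Lemma \ref{projection} to replace a map $f \in M_S^d$ by a projection $\pi_h \circ v_d$ of the Veronese image $X = v_d(S) \subset P^m$, together with its complexification $\C X = v_d(\C S) \subset \C P^m$, so that studying the self-intersections of $f$ becomes studying how a generic real $(m-4)$-plane center of projection meets the chords and tangent lines of the fixed smooth surface $\C X$. Since $v_d$ is an embedding, no singularity of the image is created by $v_d$ itself: every self-intersection point of $f(S)$ is the image of a collection of points of $\C X$ that become collinear with the center, and a Whitney umbrella arises exactly where the center meets a tangent line of $\C X$. The entire analysis is conjugation-equivariant, because $\C X$ is defined over $\R$ and the center $h$ is real, so the real locus of the image decomposes each coincidence into a contribution from real points of $\C X$ and a contribution from complex-conjugate pairs.

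First I would set up multijet transversality for the family of centers, parametrized locally (by the remarks following Lemma \ref{projection}) by $G(3,m)$. Over this family I would form the $k$-fold fibered products of $\C X$ with diagonals removed, the associated multijet evaluation maps, and the stratification of the multijet space by the Thom--Boardman symbols describing transverse double, triple, higher and tangential contacts, together with the fold/cusp data of a single sheet. The key input, and the main obstacle, is to prove that a generic center achieves transversality to all these strata, i.e. that this finite-dimensional family is rich enough to play the role of the full mapping space in Thom's theorem. This is the real-algebraic analogue of the classical fact that a generic projection of a smooth surface in a sufficiently high-dimensional projective space to $P^3$ has only ordinary singularities; it is secured by taking $d$, hence $m$, large enough that the secant, tangent and higher osculating varieties of $\C X$ have the expected dimensions, so that the incidence conditions on the center have the expected codimensions. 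I would carry out this genericity verification equivariantly with respect to conjugation, which is possible since all the relevant varieties are defined over $\R$.

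With transversality in hand, the classification of the open (codimension $0$) stratum is the Whitney classification of stable singularities of maps of a $2$-manifold into a $3$-manifold, applied to the holomorphic projection of $\C X$: the only stable local models are an immersed sheet, a transverse crossing of two sheets along a curve, a transverse triple point, and a Whitney umbrella. Passing to the real locus and sorting each crossing by how many participating sheets are real versus a conjugate pair yields precisely the five listed configurations: two real sheets, three real sheets, two conjugate sheets (a solitary double curve), two conjugate sheets and one real sheet (the real trace of a triple point whose branches are one real sheet and one conjugate pair), and the Whitney umbrella, which is forced to be real. A crossing of four or more sheets closed under conjugation cannot meet the real locus in the generic stratum, so no further mixed type appears.

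Finally I would define $\sigma$ as the union of the closures of all strata of positive codimension in the family: tangential double contacts, cusps of the double curve, quadruple points, collisions of a Whitney umbrella with a sheet or with another umbrella, and loci where a real crossing degenerates into a conjugate pair. Each such degeneracy is cut out by the vanishing of a single resultant-type condition on the center and is therefore of codimension $1$; since the varieties involved ($\C X$, its secant and tangent varieties, and the incidence conditions) are all algebraic, $\sigma$ is an algebraic subvariety of $M_S^d$ of codimension $1$, and its complement consists exactly of the maps exhibiting only the five listed local configurations. I expect the genericity-of-projections step to be the delicate one, both because it must be made equivariant and because it is the place where the largeness of $m$ is genuinely used.
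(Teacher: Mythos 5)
Your proposal is correct in its overall architecture and reaches the same list of generic local models, but it takes a genuinely different route from the paper for the key quantitative step. Both arguments begin identically, using Lemma \ref{projection} to replace $M_S^d$ by the space of centers of projection, locally $G(3,m)$ of dimension $3m-9$. From there the paper is entirely hands-on: it enumerates the candidate codimension-one degeneracies (quadruple points $Q$, triple points with a common tangent line $T$, tangential double points $H,E$, umbrella-plus-sheet $C$, colliding umbrellas $B,K$) and, in Lemmas \ref{Qlemma} through \ref{BKlemma}, computes the dimension of each incidence variety directly --- so many dimensions to choose the points on $v_d(S)$, so many prescribed directions to project along, and a Grassmannian $G(m-k-3,m-k)$ to complete the center --- showing each total is at most $3m-10$. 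You instead invoke multijet transversality over the family $G(3,m)$ together with the Whitney/Mather classification of stable map germs from surfaces to $3$-space, and handle the real/conjugate bookkeeping equivariantly; this is more conceptual and cleanly explains \emph{why} the surviving local models are exactly the stable ones. What your version buys is generality and a uniform treatment; what it costs is that the one step you yourself flag as the obstacle --- that the finite-dimensional family of centers is rich enough for transversality, equivalently that each bad incidence locus is a \emph{proper} subvariety --- is exactly the content of the paper's five dimension-counting lemmas, and your appeal to ``a single resultant-type condition'' only shows the bad locus has codimension at most one unless you also verify the condition is not identically satisfied. So to make your argument complete you would still have to carry out essentially the computations the paper performs (secant, tangent and osculating varieties of the Veronese image having expected dimension for $d$ large); as written, that verification is announced rather than executed. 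One small further discrepancy: the transition of a double curve from real type to solitary type occurs generically at a Whitney umbrella, which is on the \emph{allowed} list, so ``loci where a real crossing degenerates into a conjugate pair'' should not be placed wholesale into $\sigma$; the codimension-one events of that flavor are the elliptic tangencies $E,E'$ and the umbrella collisions $B,K$, which you do also list.
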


\begin{proof}
We begin by noting that the allowed types of intersections are topologically stable, and so cannot be avoided. Following Theorem \ref{projection} we can study the space of projections from some $P^m$ to $P^3$. We examine the affine projections from $K^m$ to $K^3$ which form the Grassmannian $G(3,m)$ having dimension $3m-9$. The hypersurface $\sigma$ consists of the subvariety of projections which result in deeper singularities than the mentioned stable singularities. 
We will calculate the the dimension of $\sigma$ by examining the different kinds of singularities that appears in its top strata and examining their dimensions separately. Note that the stable cases mentioned concerned up to  3 distinct points ending up at the same point under the projection, such that their tangent planes were in general position, and the Whitney umbrella, arising from a projection which destroyed the tangent plane for a single point.
The higher singularities we need to consider lie in the closure of the following sets in $M_S^d$: projections with quadruple points, projections with a triple point such that the common intersection of two of the tangentplanes lie tangent to the third, projections with a double point such that the tangentplanes are tangent to each other, projections with a double point such that one point lacks a tangent plane, and one has a tangentplane (Whitney-Umbrella+plane) and projections with a single singularity where two Whitney umbrellas collide, arising from projecting along a tangentvector with an odd intersection number with the surface (under a generic projection). By Lemma \ref{Qlemma} to Lemma \ref{BKlemma} below, we know that these singularities have codimension at least $1$. Any more complicated singularity will necessarily have even higher codimension. It is also clear that this collection gives all the codimension one singularities, since any singularity must necessarily have some point in the preimage and we have examined the different singularities for $1-4$ points (and any singularity involving more than 4 points or deeper tangency would have a higher codimension than the one constructed from $4$ points).
\end{proof}
We need to understand the higher singularities in $M_S^d$. The strategy for the examinations of the codimensions of these higher singularities is examine the space of projections admitting at least one such singularity, by first calculating the dimension of the space of points chosen to end up at the singularity, then examining how many vectors are prescribed to project along and finally how to complete these prescribed vectors to a projection, at the end comparing the dimension to $3m-9$. To shorten the proof we omit the detailed calculations concerning genericity of points (since we are on a Veronese embedding in general) and the defining equations for the subvariety except for the case of two points with coinciding tangent planes where we demonstrate these calculations as well. In some of the situations we should also examine the case of two of the sheets being involved being complex conjugate sheets from the complexification of $S$. However, this does not change dimensions since choosing one point in the complexification is a $4$ dimensional choice from the real point of view, while the other point is by necessity the complex conjugate, that is, the same dimension as choosing $2$ real points. We use Goryunovs notation of the singularities from \cite{GORY}.\\
\begin{lemma}\label{Qlemma}
The points of $M_S^d$ corresponding to singularities of type Q, that is, singularities with a quadruple point are of codimension at least $1$. 
\end{lemma}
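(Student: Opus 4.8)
The plan is to invoke Lemma \ref{projection} to replace $M_S^d$ by the space of affine projections $K^m \to K^3$, whose kernel is an $(m-3)$-dimensional linear subspace $N \subset K^m$; this space has dimension $3m-9$, matching the Grassmannian appearing in the proof of Theorem \ref{codim}. The key observation is that a projection $\pi$ with kernel $N$ identifies two points $p, q \in S \subset P^m$ exactly when $p - q \in N$. Hence a singularity of type Q amounts to four distinct points $p_1, p_2, p_3, p_4 \in S$ for which the three difference vectors $v_i = p_{i+1} - p_1$ (for $i = 1, 2, 3$) all lie in $N$; these are the directions we are \emph{prescribed} to project along.

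First I would carry out the three-step dimension count described above for this locus. Choosing the four points $p_1, \dots, p_4$ on the surface $S$ contributes $4 \cdot 2 = 8$ real dimensions. For generic such points the Veronese images are in general position, so $v_1, v_2, v_3$ are linearly independent and span a fixed $3$-dimensional subspace $U = \langle v_1, v_2, v_3\rangle$. It then remains to complete $U$ to a full kernel $N \supseteq U$ of dimension $m-3$, which is the same as choosing an $(m-6)$-dimensional subspace of $K^m / U$; the space of such completions is a Grassmannian of dimension $3(m-6) = 3m - 18$.

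Adding the contributions gives a total dimension of $8 + (3m - 18) = 3m - 10$ for the top stratum of the type Q locus, which is one less than the ambient dimension $3m - 9$. This yields the claimed codimension at least $1$. Configurations in which the four points are in special position, so that $v_1, v_2, v_3$ fail to be independent, lie in a lower-dimensional subset of the point space and so cannot raise the total dimension; likewise the variant in which two of the sheets come from a complex conjugate pair of $\C S$ leaves the count unchanged, since a conjugate pair of complex points costs the same $4 + 4 = 8$ real dimensions as four real points, as explained above.

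The main obstacle is the genericity hidden in the second step: one must verify that for four general points of $S$ the Veronese embedding really does place them in sufficiently general position that the difference vectors are independent, and that the incidence correspondence between point-tuples and completing kernels has the expected dimension (equivalently, that the projection to the Grassmannian of kernels has fibers of the predicted size rather than anomalously large ones). As the paper indicates, this transversality is what follows from working on a high-degree Veronese embedding and is precisely what we suppress; granting it, the dimension count above establishes the codimension bound.
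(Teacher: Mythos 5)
Your proposal follows essentially the same route as the paper's proof: reduce to the space of projections via Lemma \ref{projection}, spend $8$ dimensions choosing the four points, prescribe the $3$-space spanned by their differences, and complete it to a kernel by a choice in $G(m-6,m-3)$ of dimension $3m-18$, for a total of $3m-10$ and hence codimension $1$. The one place the paper is slightly more quantitative is the degenerate stratum where the four points span only a line or a plane: there the completion Grassmannian \emph{gains} $6$ or $3$ dimensions respectively, so it is not enough to say the configuration lies in a lower-dimensional subset of the point space --- one must check, as the paper does, that the point configuration loses at least $6$ or $4$ dimensions, which outweighs the gain.
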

\begin{proof}
Consider the variety of projections with at least one quadruple point (regardless of if they are arising from real or complex parts of the surface). For the moment we assume that the points are real.\\ For a quadruple point in the image, we need 4 points on the surface in $P^m$ mapped to the same point by a projection. The surface is two dimensional, so the configuration space of four points on the surface have dimension $8$. The surface $S$ intersect a given m-2-hyperplane in a discrete number of points generically, so choosing points in a line/plane would decrease the dimension by at least $6,4$ respectively. To choose a projection from $P^m$ to $P^3$ we need to complete the three dimensional space generated by the four points to a $m-3$ dimensional space, i.e. we need to choose a point from $G(m-6,m-3)$ which has dimension $3m-18$. If the points chosen were situated on the same line/plane we would only be proscribed to use $1$ respectively $2$ vectors giving a increase in dimension. However, the original choice of of points had a lower dimension, compensating for this increase. The dimension of the space of projections with at least one quadruple point is then at most $3m-10$ and so the space of projections admitting a quadruple point has co-dimension 1 in the space of projections. The dimension count for quadruple points with 2 and 4 points arising from complex parts of the surface proceeds in the same manner. \\
\end{proof}
\begin{lemma}\label{Tlemma}
The points of $M_S^d$ corresponding to singularities of type T, that is, singularities with a triple points such that the three sheets intersect with a common tangent line are of co-dimension at least $1$. 
\end{lemma}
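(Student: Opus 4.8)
The plan is to follow the dimension-counting scheme used in the proof of Lemma \ref{Qlemma}, now adapted to the extra tangency constraint that characterises type $T$. By Lemma \ref{projection} it suffices to bound the dimension of the locus in the space of projections $G(3,m)$ (of dimension $3m-9$) whose image carries such a singularity; I will produce the upper bound $3m-10$. As before I describe a projection by its linear kernel $W\subset K^m$ with $\dim W=m-3$, so that two points of $K^m$ have the same image exactly when their difference lies in $W$, and the image of a tangent plane direction $\vec T_i$ is the subspace $\pi_W(\vec T_i)\subset K^m/W\cong K^3$.

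First I count the underlying triple point. Choosing three points $p_1,p_2,p_3$ on the surface is a $6$-dimensional choice, and it fixes the three tangent planes $\vec T_1,\vec T_2,\vec T_3$. For the three points to collapse to one image point the kernel must contain the $2$-dimensional space $D=\langle p_2-p_1,\ p_3-p_1\rangle$; completing $D$ to $W$ amounts to choosing $W/D$ in $G(m-5,m-2)$, a choice of dimension $3m-15$. Together with the $6$ parameters for the points this reproduces the expected $3m-9$ for an ordinary (stable) triple point, confirming that triple points occur in codimension $0$.

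The defining feature of type $T$ is the extra requirement that the three image sheets share a common tangent line, i.e. that $\pi_W(\vec T_1)\cap\pi_W(\vec T_2)\cap\pi_W(\vec T_3)$ contain a line through the triple point. For generic $W\supseteq D$ the three images are genuine $2$-planes in the $3$-dimensional space $K^m/W$, so $\pi_W(\vec T_1)\cap\pi_W(\vec T_2)$ is already a single line $\ell$, and requiring $\ell\subset\pi_W(\vec T_3)$ --- a line lying inside a prescribed plane in a $3$-space --- is a single equation on $W$. This cuts the dimension of the admissible $W$ down to $3m-16$, so the total dimension of the type $T$ locus is $6+(3m-16)=3m-10$, giving codimension at least $1$ as claimed.

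The step I expect to be the real obstacle is justifying that the common--tangent--line requirement is genuinely a single, transverse condition rather than something that is either automatic or more degenerate: concretely, that as $W$ varies over the kernels $W\supseteq D$ the induced map to the triple of image planes is a submersion onto enough of the configuration space that ``$\ell\subset\pi_W(\vec T_3)$'' has the expected codimension $1$. A short incidence count makes this precise: the expected dimension of $P(\vec T_1)\cap P(\vec T_2+W)\cap P(\vec T_3+W)$ in $P^{m-1}$ is $1+(m-3)-(m-1)=-1$ for generic $W$, so this intersection is empty generically and the locus where it becomes nonempty is exactly codimension $1$. As in Lemma \ref{Qlemma} the remaining bookkeeping is routine: degenerate configurations (collinear points, or tangent planes meeting $W$ so that a sheet degenerates to a Whitney umbrella) only lower the dimension and hence raise the codimension, and the variants in which two of the three sheets come from a complex conjugate pair contribute the same dimension, since selecting a complex point together with its conjugate costs the same $4$ real parameters as selecting two real points. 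Hence the type $T$ stratum has codimension at least $1$.
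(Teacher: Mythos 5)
Your proposal is correct and follows essentially the same strategy as the paper: a dimension count of the type-$T$ locus inside the $(3m-9)$-dimensional space of projections $G(3,m)$, arriving at the same bound $3m-10$. The only difference is bookkeeping --- the paper builds the constrained kernel directly ($6$ for the points, $3$ for a chosen line in each tangent plane, $2$ for a plane in their span to project along, plus $G(m-7,m-4)$ of dimension $3m-21$), whereas you first recover the full-dimensional stratum of ordinary triple points and then cut it by the single incidence condition $\ell\subset\pi_W(\vec T_3)$, which you correctly verify has codimension $1$.
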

\begin{proof}
The choice of three points is $6$-dimensional. To give the three tangent planes a common line after the projection we need to choose one line from each tangent plane which we want to be the common line after the projection. This choice is $3$ dimensional. We need to choose a plane from the space spanned by these three lines to project along. This is a $2$ dimensional choice. We now have a $4-$space which we need to project along (2 dimensions for ensuring that the three points ended up at the same points, and 2 to ensure that the tangent planes had a common line). We complete it by choosing a point from $G(m-7,m-4)$ which has dimension $3m-21$. The total dimension is then $3m-10$.\\
\end{proof}
\begin{lemma}\label{HElemma}
The points of $M_S^d$ corresponding to singularities of type H and E, that is, singularities with a double points such that the two sheets are tangent (with hyperbolic tangency for type H and elliptic tangency for type E) are of co-dimension at least $1$. 
\end{lemma}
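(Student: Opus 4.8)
The plan is to follow the same dimension-counting strategy as in Lemma \ref{Qlemma} and Lemma \ref{Tlemma}, working in the space of affine projections $G(3,m)$ of dimension $3m-9$, and to show that the locus of projections producing a tangential double point forms a subvariety of dimension at most $3m-10$. First I would choose the two points $p_1,p_2$ on the Veronese image of $S$ at which the two sheets are to meet; this is a $4$-dimensional choice. Writing $v=p_2-p_1$ for the secant vector and $P,Q$ for the two $2$-dimensional spaces of tangent directions at $p_1,p_2$, the two points are carried to the same image point exactly when $v$ lies in the kernel of the projection, and the two sheets meet \emph{tangentially} rather than transversally exactly when the images of the tangent planes coincide, i.e. when the kernel meets $P+Q$ in a $2$-dimensional subspace $R$ transverse to both $P$ and $Q$. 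Equivalently, the $3\times 4$ matrix $[\,\pi u_1\mid \pi u_2\mid \pi w_1\mid \pi w_2\,]$ of projected tangent vectors, which generically has rank $3$, must drop to rank $2$; this rank condition, i.e. the vanishing of its four $3\times 3$ minors, is the explicit defining system for the subvariety that I would record in detail.

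For the count I would first note that for a generic pair $(p_1,p_2)$ the five vectors $v,u_1,u_2,w_1,w_2$ are linearly independent, so that $\langle v\rangle\oplus(P+Q)$ is $5$-dimensional; this is the genericity statement that I would verify explicitly from the Veronese embedding, since $S$ sits inside $P^m$ by a high-degree Veronese and pairs of its points therefore have tangent data in general position. Granting this, the tangency data prescribed to the kernel is the $3$-dimensional space $\langle v\rangle\oplus R$, where the admissible $R\subset P+Q$ vary in an open subset of $G(2,4)$ of dimension $4$. The kernel of the projection is $(m-3)$-dimensional and must contain this fixed $3$-space, so it is completed by a choice from $G(m-6,m-3)$ of dimension $3m-18$, and the generic completion meets $P+Q$ in exactly $R$ so that no further collapse occurs. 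Adding the contributions gives $4+4+(3m-18)=3m-10$ for the incidence variety $\{(p_1,p_2,\pi)\}$, and since the forgetful map to $G(3,m)$ can only decrease dimension, the projections admitting a tangential double point form a subvariety of dimension at most $3m-10$, i.e. of codimension at least $1$. The splitting into types H and E is the division of this locus according to whether the contact quadratic form, read off the coinciding image tangent plane, is indefinite (hyperbolic) or definite (elliptic); both are open conditions and hence of the same codimension.

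The step I expect to be the main obstacle is the genericity verification promised above: one must check, using the concrete form of the Veronese embedding, that for a generic secant the tangent data is in general position, so that the naive count of the determinantal tangency condition is the correct one and neither the coincidence locus nor the tangency minors degenerate in dimension. Once that general-position statement is in hand the remaining arithmetic is routine, and the strict inequality $3m-10<3m-9$ yields the codimension bound regardless of the fiber dimensions of the map to $G(3,m)$; the same count with two of the sheets replaced by a complex conjugate pair changes nothing, since selecting a complex point of $\C S$ together with its conjugate is a $4$-dimensional real choice, exactly as for two real points.
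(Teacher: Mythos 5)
Your proposal is correct and follows essentially the same route as the paper: a $4$-dimensional choice of the two points, a $4$-dimensional choice from $G(2,4)$ inside the $4$-dimensional span of the two tangent planes, and completion via $G(m-6,m-3)$ of dimension $3m-18$, totalling $3m-10$, with the genericity of the tangent data handled (as in the paper) by the observation that distinct points of a Veronese embedding of degree $d>1$ share no tangent direction. The only cosmetic difference is that you express the tangency locus as a determinantal rank condition on the projected tangent vectors, while the paper phrases it as the degeneracy of the quadratic form of the local defining equation; both serve the same purpose.
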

\begin{proof}
To see that projections which result in two sheets tangent to each other form a variety we observe that (in affine coordinates with the point of tangency being origo) the leading term of the polynomial defining such a degenerate surface must be $(ax+by+cz)^2+O(\bar x^3)$ where $\bar x^3$ stands for terms of order $3$. The corresponding matrix to the quadratic form is thus degenerate and of rank $1$. This can be expressed in terms of the $2\times2$ minorants of the matrix being zero. This defines a subvariety of $G(3,m)\times S$ which after projection to $G(3,m)$ gives the projections which result in these singularities.
The dimension of the space of projections with two points with having the same tangent plane in the image will now be calculated. We assume that the points are real. Two distinct points in the Veronese embedding $v_d$ of $P^n$ do not have a tangent in common as long as $d>1$, and thus especially the points on our surface do not have a tangent in common. In the case of $d=1$ and $S\subset P^n, n>3$ it is easy to see that there are only a finite number of pairs of points with common tangent plane. Thus, the choice of such points would contribute zero dimensions and completing the projection would result in dimension $3m-12$ resulting in co-dimension 3. We can ignore such points henceforth and assume that the tangent planes of the points chosen have no nonzero vectors in common. Choosing the two points contributes 4 dimensions. Ensuring that they end up at the same point under the projections gives one vector along which we need to project. The tangent planes will form a 4-dimensional linear subspace from which we need to choose a 2 dimensional subspace to project along to give a common tangent plane. Since $\dim(G(2,4))=4$ we get 4 additional dimensions. Again we wish to complete the 3 vectors by choosing a point from $G(m-6,m-3)$. Giving a total dimension of at most $3m-10$. Either the tangency is elliptic (case E) or hyperbolic (case H) (described by $x^2+y^2=\epsilon$ and $xy=\epsilon$ respectively in the common tangent plane). \\
\end{proof}
\begin{lemma}\label{Clemma}
The points of $M_S^d$ corresponding to singularities of type C, that is, singularities with a double point such that a Whitney umbrella intersects a sheet, are of co-dimension at least $1$. 
\end{lemma}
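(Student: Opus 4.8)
The plan is to follow the same dimension-counting template used for Lemmas~\ref{Qlemma}--\ref{HElemma}. By Lemma~\ref{projection} it suffices to work inside the space of affine projections $K^m\to K^3$, i.e. the Grassmannian $G(3,m)$ of dimension $3m-9$; concretely, a projection is specified by its $(m-3)$-dimensional kernel $V$, the subspace along which we project. I want to bound the dimension of the locus of kernels $V$ that produce a type $C$ configuration --- a Whitney umbrella whose pinch point happens to lie on a second sheet crossing it transversally --- by $3m-10$, which gives codimension at least $1$.

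First I would isolate how each feature of the configuration constrains $V$. The restriction of the projection to $S$ fails to be an immersion at a point $p_1$ exactly when $V\cap T_{p_1}S\neq 0$, and one obtains a Whitney umbrella precisely when this intersection is a single line $\ell_1$ (if $T_{p_1}S\subset V$ the whole tangent plane collapses, a strictly deeper singularity). The genericity of the second-order jet that upgrades this to an honest Whitney umbrella is an open condition and costs nothing. Hence the umbrella prescribes exactly one direction $t_1\in\ell_1\subset V$, while the choice of the line $\ell_1$ inside the projectivized tangent plane $P(T_{p_1}S)\cong P^1$ contributes one dimension. The requirement that the pinch point lie on a second sheet, through a point $p_2\in S$, forces the image of $p_2$ to equal that of $p_1$, prescribing the single vector $p_1-p_2\in V$; transversality of the crossing is again open.

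Assembling the count: choosing $p_1$ and $p_2$ on the surface gives $4$ dimensions, choosing the tangent line $\ell_1$ gives one more, and the two prescribed vectors $p_1-p_2$ and $t_1$ span a generically $2$-dimensional subspace $W\subset V$. Completing $W$ to the full kernel is the choice of a point of $G(m-5,m-2)$, of dimension $3m-15$. The total is $4+1+(3m-15)=3m-10$, so the type $C$ locus has codimension at least $1$ in $M_S^d$, exactly as for Lemma~\ref{HElemma}. As in the remark preceding Lemma~\ref{Qlemma}, letting the transverse sheet come from a complex conjugate pair does not change this, since selecting a complex point (its conjugate being forced) is a $4$-dimensional real choice, the same as picking two real points.

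The one step I expect to require real care is the local characterization in the second paragraph: showing that \emph{producing a Whitney umbrella} is captured exactly by $\dim(V\cap T_{p_1}S)=1$ together with an open nondegeneracy condition, with no hidden constraint that would perturb the count. Granting that normal form, the degenerate sub-loci --- where $p_1-p_2$ and $t_1$ fall into a common line, or where the tangency is of higher order --- only lower the dimension further and are therefore harmless, so the remainder of the argument is the routine bookkeeping above.
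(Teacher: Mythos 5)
Your count is exactly the paper's: a $4$-dimensional choice of the two points, a $1$-dimensional choice of the tangent line to collapse, the two prescribed vectors completed to a kernel via $G(m-5,m-2)$ of dimension $3m-15$, giving $3m-10$ in total. The proposal is correct and follows essentially the same approach as the paper, only adding some extra care about the normal-form characterization of the umbrella and the complex-conjugate and degenerate subcases.
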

\begin{proof}
For the case of point and Whitney umbrella, we have a $4$ dimensional choice of points. To ensure that we get a Whitney umbrella, we need to destroy the tangent plane of one of the points. We need to choose one vector in the tangent plane of one of the points to project along. This is a $1$ dimensional choice. This vector, together with the vector along which we project to ensure that the two points end up at the same space gives us two vectors. To complete them we need to choose a point from $G(m-5,m-2)$ which has dimension $3m-15$. Again we have a total dimension of $3m-10$.\\
\end{proof}
\begin{lemma}\label{BKlemma}
The points of $M_S^d$ corresponding to singularities of type B,K, that is, singularities with a single point such that two Whitney umbrellas collide, are of co-dimension at least $1$. 
\end{lemma}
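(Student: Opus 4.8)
The plan is to follow the dimension-counting strategy of Lemmas \ref{Qlemma}--\ref{Clemma}: parametrize the locus of projections in $G(3,m)$ producing a singularity of type B or K by choosing the point of $S$ that becomes singular, recording the tangent data we are forced to project along, and counting the completions of this data to a full projection, comparing with $\dim G(3,m)=3m-9$.

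First I would fix the single point of $S$ at which the two Whitney umbrellas collide; as $S$ is two-dimensional this is a $2$-dimensional choice. Recall that an ordinary Whitney umbrella is produced by projecting along an arbitrary tangent vector at the point, a $1$-dimensional choice of direction which prescribes a single vector to project along; completing this one vector by a point of $G(m-4,m-1)$ (dimension $3m-12$) gives $2+1+(3m-12)=3m-9$, consistent with the Whitney umbrella being a stable, codimension-$0$ singularity. For two Whitney umbrellas to collide, however, the proof of Theorem \ref{codim} forces the projection direction to be the special tangent vector whose line has odd (hence $\geq 3$) local intersection number with the surface, which is exactly the condition that the direction be asymptotic, i.e. that the second fundamental form vanish on it. At a generic point the asymptotic directions form a discrete set, so this refines the direction choice from $1$ dimension to $0$ dimensions while still prescribing a single vector.

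Completing the one prescribed asymptotic tangent vector to a projection again amounts to choosing a point of $G(m-4,m-1)$ of dimension $3m-12$, so the total dimension of the locus is $2+0+(3m-12)=3m-10$, one less than $3m-9$; hence this locus has codimension at least $1$. The two subtypes B and K correspond to the two local models for how the umbrellas merge and differ only in sign and configuration data, so both are governed by the identical count and neither lowers the codimension. As in the earlier lemmas I would suppress the detailed genericity checks arising from the Veronese embedding.

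The step I expect to be the main obstacle is the middle one: justifying that the collision of two Whitney umbrellas is governed precisely by the asymptotic (odd-contact) condition, and that imposing it genuinely reduces the direction choice to a finite set rather than merely to a codimension-$1$ subset of the $3$-dimensional space of (point, tangent direction) pairs. This identification of the collision stratum with the asymptotic-direction locus is what legitimizes the drop from a $1$-dimensional to a $0$-dimensional direction count, and thus is the crux on which the codimension-$1$ conclusion rests.
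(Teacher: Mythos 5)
Your proposal is correct and follows essentially the same dimension count as the paper: a $2$-dimensional choice of point, a $0$-dimensional (discrete) choice of the special tangent direction of odd contact with the surface (which the paper phrases via nonzero curvature and odd intersection number, and you phrase equivalently via asymptotic directions of the second fundamental form), and a completion in $G(m-4,m-1)$ of dimension $3m-12$, totalling $3m-10$. The crux you flag --- that the collision stratum is cut out by the discrete odd-contact/asymptotic condition, with the finitely many flat points contributing only higher codimension --- is exactly the point the paper's proof rests on as well.
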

\begin{proof}
For the final case, of a single point, we needed to destroy the tangent plane to the point and so we have to choose one vector in the tangent plane. This tangent vector is by definition tangent to the surface. The surface has some curvature in this point. The surface only has zero curvature in finitely many points (choosing such a point would decrease the dimension by $2$, leading to co-dimension $2$). For a point with nonzero curvature only finitely many tangent lines intersect the surface with an odd intersection number (two in the case of negative curvature, zero in the case of positive curvature). Projecting along such a special tangent line will yield a different picture, namely the result of two umbrellas colliding (along either a real or a solitary self intersection). Choosing our point is a $2$ dimensional choice, and choosing our tangent line is of dimension $0$ since the choice was discrete. Completing to a projection necessitates a choice of point from $G(m-4,m-1)$ which is of dimension $3m-12$. The dimension of projections resulting in at least one point with deeper singularity than the umbrella is then of dimension $3m-10$.
\end{proof}

\begin{remark}
For a closer description of these singularities (and others of higher co-dimension) see Hobbs and Kirk \cite{KIRK}, more specifically table 1. There $B,K$ correspond to $S_k^\pm$, $H,E$ correspond to $A_0^2|A_k^\pm$, $C$ correspond to $(A_0S_0)_k$, $T$ to $A_0^3|A_k$ and $Q$ to $A_0^4$.
\end{remark}

\subsection{Constructing the pushoff from the self intersection}\label{gammasub}

Around generic points, the self intersection $C_S$ of a parametrized surface associated to a point in $M_S^d\setminus\sigma$ arises from either two real sheets intersecting along a real line or two complex conjugate sheets intersecting in a real line. A piece of the self intersection arising from two complex conjugate sheets is called a \emph{solitary} self intersection. Around isolated points the self intersection can also be either a Whitney umbrella separating a real line appearing from two complex conjugate sheets from a real line appearing from two real sheets or three real sheets intersecting, giving rise to three intersecting real lines or two complex conjugate planes and a real plane, resulting in a single real line (and two complex conjugate lines which are ignored). The set of self intersection points $C_S$ will then be a collection of immersed circles. Let the immersed circles $C_{S_i}$ be indexed by $i$. Since the circles are immersed, each circle has an associated normal bundle in $\R P^3$ while their complexification has an associated normal bundle in $\C P^3$. Given a point $p$ in $C_{S_i}$ we can examine the defining polynomial  of the surface in $\C P^3$ and examine its restriction to the normal plane $p_n$. This will associate a quadratic form to $p$ by taking the terms of at most order $2$ from the polynomial (we do not have any linear terms since we are in the self intersection). This assigns a continuous family of quadratic forms to $C_S$. We can then associate a continuous family of eigenvectors as long as the matrix associated to the quadratic form has two different eigenvalues.  For those points in $C_{S_i}$ which are not triple points or Whitney umbrellas we can assume that no such quadratic form has only one eigenvalue of multiplicity two (i.e. the associated matrix is a multiple of the identity matrix). The condition of having only one eigenvalue is of co-dimension $1$ in $M_S^d$ as described by the following lemma.

\begin{lemma}\label{gamma}
The points in $M_S^d\setminus\sigma$ which correspond to parametrizations which has a self intersection assigned a corresponding 2-form in the normal bundle which has just one eigenvalue (i.e. it is a multiple of the identity matrix) are of co-dimension 1.
\end{lemma}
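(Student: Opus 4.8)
The plan is to mirror the dimension counts of Lemma \ref{Qlemma}--Lemma \ref{BKlemma}: via Lemma \ref{projection} I work inside the Grassmannian $G(3,m)$ of projections, of dimension $3m-9$, and exhibit the bad locus $\gamma$ as the image of an incidence variety of dimension $3m-10$.

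First I would record a structural restriction. At a point of the self intersection the quadratic form $Q$ associated to the normal plane is, to leading order, the product $\ell_1\ell_2$ of the two linear forms cutting out the two sheets in the normal plane. For two real sheets $\ell_1,\ell_2$ are real and independent, so $\ell_1\ell_2$ is indefinite (of signature $(1,1)$, the hyperbolic normal form $xy$) and can never be a nonzero multiple of the identity, which is definite. For two complex conjugate sheets $\ell_2=\bar\ell_1$, so $Q=\ell_1\bar\ell_1=(\mathrm{Re}\,\ell_1)^2+(\mathrm{Im}\,\ell_1)^2$ is definite, and $Q$ is a multiple of the identity precisely when the two real linear forms $\mathrm{Re}\,\ell_1,\mathrm{Im}\,\ell_1$ are orthogonal and of equal length in the chosen metric. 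Hence the condition can only be met along a solitary self intersection, and it is cut out by two equations (equality of the diagonal entries and vanishing of the off-diagonal entry of the $2\times 2$ matrix of $Q$). So I only need to treat solitary double points; triple points and Whitney umbrellas sit at isolated points of $C_S$, and forcing the condition there merely adds constraints, raising the codimension.

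Next I would set up the incidence variety $\Sigma$ of pairs consisting of a complex conjugate point pair $p_1,\bar p_1\in\C S$ together with a projection that collapses $p_1,\bar p_1$ to a common image and for which $Q$ is a multiple of the identity at that image point. Choosing the pair $p_1,\bar p_1$ costs $4$ real dimensions (as observed in the paragraph preceding Lemma \ref{Qlemma}, a conjugate pair costs the same as two real points). Collapsing $p_1,\bar p_1$ prescribes one real vector to project along, namely the direction of the conjugation invariant line through them, so the projection is completed by choosing a point of $G(m-4,m-1)$ of dimension $3m-12$, exactly as in Lemma \ref{BKlemma}. Imposing the two equations above cuts this down by $2$, giving $\dim\Sigma = 4+(3m-12)-2 = 3m-10$. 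Since a generic solitary self intersection curve meets the (metric dependent, codimension $2$) locus of round forms in only finitely many points, the projection $\Sigma\to G(3,m)$ has generically finite fibres over its image $\gamma$, whence $\dim\gamma=3m-10$ and $\gamma$ has codimension $1$.

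The main obstacle is justifying that the two defining equations really are independent as functions of the projection, i.e. that the map sending a projection to the class of $Q$ modulo scaling is a submersion onto the $2$-dimensional space of definite forms modulo scaling at the relevant point; only then does imposing $Q$ a multiple of the identity drop the dimension by exactly $2$, rather than by $1$ (which would give codimension $0$) or by more. This is a transversality statement, and as in the preceding lemmas I would establish it generically, suppressing the explicit defining equations on the Veronese image; here one must also keep the metric fixed, since, as already noted, $\gamma$ depends on the metric on $\R P^3$ and $\C P^3$. Non-emptiness, so that the codimension is exactly $1$ and not larger, follows because one can always tilt the two conjugate sheets through a one parameter family of projections until the definite form $Q$ becomes round.
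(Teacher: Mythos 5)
Your argument is correct, and its arithmetic core --- being a multiple of the identity is two conditions on the $2\times 2$ symmetric matrix of $Q$, offset by the one real dimension of the self-intersection curve, for a net codimension of one --- is exactly the paper's. The packaging, however, is different. The paper does not run the Grassmannian incidence-variety count at all for this lemma: it argues locally, writing the form at a bad point as $x^2+tx^2+y^2$ and exhibiting an explicit near-identity linear perturbation $L(x,y)=(x,y+\epsilon x)$ of $\R P^3$ that introduces an off-diagonal term and so moves the parametrization off the locus; it then combines the discreteness of the bad points on $C_S$ with the bare statement that multiples of the identity have codimension $2$ in the space of real symmetric matrices while the self-intersection is one-dimensional. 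Your version, by contrast, reruns the machinery of Lemmas \ref{Qlemma}--\ref{BKlemma}: a conjugate pair costs $4$, collapsing it prescribes one real direction, completion costs $3m-12$, and the two matrix conditions cut this to $3m-10$; this is consistent with the paper's counts and has the virtue of uniformity with Section \ref{mdproj}. You also front-load the observation that $Q=\ell\bar\ell$ is definite only on solitary pieces, so $\gamma$ lives entirely over solitary self-intersections; the paper states this only after Theorem 3.8 rather than inside the proof. What the paper's explicit perturbation buys, and what your write-up correctly flags as its own remaining obstacle, is a concrete witness that at least one of the two conditions can be independently broken; conversely your treatment of non-emptiness (tilting the conjugate sheets until $Q$ becomes round) is more explicit than anything in the paper about why the codimension is not larger than one. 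Neither proof fully verifies that both conditions are independent, so your honesty about that gap matches the actual state of the paper's argument.
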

\begin{proof}
Since the matrix depends on the metric, we can disturb the parametrization by some linear transformation of $\R P^3$ which is close to identity to gain a a different parametrization which do not have a corresponding matrix which is a multiple of the identity along the self intersection. Since the set of points in the image with such associated matrices is a discrete set, it is enough to show this for one point. We assume that the associated 2-form is $x^2+tx^2+y^2$ in local coordinates around the point at the self intersection $x=y=0$. By letting $L(x,y)=(x,y+\epsilon x)$ we disturb the intersection enough to remove the singular case.
The matrices which are a multiple of the identity matrix have co-dimension 2 in the space of real symmetric matrices. The self intersection itself form a one dimensional space, and so the singular maps cannot be of higher co-dimension than 1.
\end{proof}
\begin{definition}
We let $\gamma$ denote the parametrizations in $M_S$ which has points in the corresponding self intersection such that the matrix associated to the quadratic form is a multiple of the identity matrix. 
\end{definition}
\begin{remark}
Note that changing the metric on the space would change $\gamma$. In this article we assumed that the metric is the natural Fubini-Study metric on $\C P^3$. 
\end{remark}

Associated to each such form are are then four (real) eigenvectors of unit length. After scaling them by some small $\epsilon$ these four vectors will be four nonzero sections in the normal bundle of the immersed circle and thus form a fiber bundle $B_{S_i}$ over it.
We now push off our original curve as follows. Take our original immersed circle $C_{S_i}$ and include the bundle $B_{S_i}$ wherever it is defined (that is, over points which comes from exactly two sheets intersecting transversally). This will locally give us four curves lying at the boundary of an $\epsilon$ tubular neighborhood of $C_{S_i}$. At the special points where the self intersection does not look like two sheets intersecting we may extend our construction as shown by the following lemma.
\begin{lemma}
The fourfold pushoff can be extended in a continuous way to non generic points along an immersed circle $C_{S_i}$.
\end{lemma}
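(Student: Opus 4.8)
The plan is to handle the two kinds of non-generic points that occur along $C_{S_i}$ — triple points and Whitney umbrellas — separately. In each case I would write down a local normal form for the surface, compute the leading quadratic form it induces on the normal plane of the self intersection, and check that the four unit eigendirections possess a common limit as one crosses the special point. The guiding observation is that the pushoff depends only on the eigen\emph{directions} of the associated form, and not on its eigenvalues nor on an overall nonzero rescaling of the form; it therefore suffices to track these directions and to exhibit their limits.

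First I would treat a triple point. In local affine coordinates centered at the point the surface is, to leading order, a union of three transverse sheets with defining polynomial $f = l_1 l_2 l_3 + (\text{terms of order} \ge 4)$, where $l_1,l_2,l_3$ are the linear forms vanishing on the three sheets. The arc of $C_{S_i}$ through the point is the intersection of two of them, say $\{l_1 = l_2 = 0\}$, a smooth line along which $l_3$ is nonzero away from the origin. Restricting $f$ to the normal plane $p_n$ at a nearby point $p$ of this line and keeping the terms of order at most two in the plane coordinates, one finds the induced form to be $c\,(l_1|_{p_n})(l_2|_{p_n})$ with $c = l_3(p) \neq 0$ — a nonzero multiple of the product of the two linear forms cutting out the two sheets. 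Its eigendirections are exactly the bisectors of those two sheets in $p_n$, which are independent of the scalar $c$ and vary continuously as $p$ crosses the triple point. Defining the four sections at the triple point to be these limiting bisector directions yields the continuous extension; the analogous computation in the ``two complex conjugate and one real sheet'' configuration produces a definite product form with real eigendirections and goes through identically.

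Next I would treat a Whitney umbrella using the standard model $f = y^2 - x^2 z$, whose real self intersection is the ray $\{x=y=0,\ z>0\}$ and whose solitary self intersection is $\{x=y=0,\ z<0\}$, with the umbrella point at the origin. Restricting to the normal plane $\{z = z_0\}$ gives the form $y^2 - z_0 x^2$, whose eigendirections are the $x$- and $y$-axes for every $z_0$, including the degenerate value $z_0 = 0$, where the form drops to $y^2$ but the two eigendirections remain distinct. Hence the four sections agree on the real side, at the umbrella point, and on the solitary side, and the pushoff extends continuously across the transition; the coincident-eigenvalue locus (where the induced form becomes proportional to $x^2+y^2$, at $z_0=-1$ in this model) lies elsewhere on the solitary branch and is exactly what the discriminant $\gamma$ removes.

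The main obstacle is the triple point rather than the umbrella: there the order-two part of the induced form vanishes identically at the point itself, so the eigendirections cannot be read off directly and must be recovered as a limit. The substance of the argument is therefore the local computation above, showing that the leading induced form along the approaching arc is a nonzero scalar multiple of a fixed product of linear forms, so that the bisector directions converge and — crucially, since $C_{S_i}$ passes straight through the smooth arc $\{l_1=l_2=0\}$ — agree from the two sides. I would also record that continuity of the fixed ambient metric guarantees that these bisectors are genuine eigenvectors varying continuously, and that taking $\epsilon$ small keeps the four pushoff curves inside the tubular neighborhood and clear of the other two intersection arcs meeting at the triple point.
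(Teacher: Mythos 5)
Your argument takes essentially the same route as the paper's: a case-by-case analysis of the non-generic local configurations (real triple point, triple point with two complex conjugate sheets, Whitney umbrella) using local normal forms, checking in each that the four eigendirections extend continuously. You merely carry out explicitly the limit computation that the paper dismisses with ``extend by continuity,'' so the proposal is correct and matches the paper's proof in substance.
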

\begin{proof}
To see that this can be defined in a natural way we examine each situation, when necessary expressed in local coordinates $x,y,z$.
\begin{itemize}
\item Two complex conjugate sheets and one real sheet intersecting in a triple point: we ignore the real sheet and extend by continuity.
\item A Whitney umbrella, defined by $y^2-zx^2=0$: the eigenvectors are kept, one eigenvalue shifts signs.
\item Three real sheets intersecting at a triple point: Defined by $xyz=0$. The pushoff (following one of the intersection curves) will clearly be extendable by continuity, furthermore the pushoff and the self intersections will be disjoint.
\end{itemize}
\end{proof}
\begin{definition}
Given such an immersed circle $C_{S_i}$ the associated extended fourfold pushoff will be denoted by $P(C_{S_i})$ and called the \emph{pushoff} of $C_{S_i}$.
\end{definition}

\begin{lemma}
The pushoff $P(C_{S_i})$ has either one, two or four components. 
\end{lemma}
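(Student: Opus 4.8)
The plan is to recognise $P(C_{S_i})$ as an honest $4$-fold covering of the circle $C_{S_i}\cong S^1$ and to read off the number of components from the resulting monodromy permutation. By the preceding lemma the four eigenvector sections extend continuously across the finitely many special points (triple points and Whitney umbrellas), so $P(C_{S_i})$ is a closed $1$-manifold mapping to $C_{S_i}$ as a genuine unbranched $4$-fold cover, with fibre the $4$-element set $\{+v_1,-v_1,+v_2,-v_2\}$ of unit eigenvectors of the two distinct eigenvalues (distinctness being exactly the condition that $f\notin\gamma$). Fixing a basepoint and transporting the fibre once around the circle gives a monodromy permutation $\pi\in S_4$, and the number of connected components of $P(C_{S_i})$ equals the number of orbits of $\pi$. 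A priori this lies between $1$ and $4$, so the entire content of the lemma is to exclude the value $3$.

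First I would extract the symmetry constraints on $\pi$. Negating a continuous eigenvector section again produces a continuous eigenvector section, so $\pi$ commutes with the fixed-point-free antipodal involution $\alpha=(+v_1\,-v_1)(+v_2\,-v_2)$; hence $\pi$ lies in the centraliser of $\alpha$ in $S_4$, a dihedral group of order $8$. Enumerating its elements by cycle type leaves exactly the following possibilities: the identity ($4$ orbits); the two ``single eigenline'' transpositions $(+v_1\,-v_1)$ and $(+v_2\,-v_2)$ ($3$ orbits each); the full antipode $\alpha$ together with the two block-exchanging double transpositions ($2$ orbits each); and the two block-exchanging $4$-cycles ($1$ orbit each). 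Thus the component count is a priori $1$, $2$, $3$ or $4$, and the value $3$ occurs precisely when $\pi\in\{(+v_1\,-v_1),(+v_2\,-v_2)\}$.

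To rule these out I would combine the dichotomy \emph{eigenline-preserving} versus \emph{eigenline-swapping} monodromy with orientability of the normal bundle. The partition of the fibre into the two eigenlines $\{+v_i,-v_i\}$ is intrinsic, but the ordered pair of eigenvalues is defined only up to a global sign (the defining polynomial $F$ of the surface is determined only up to sign); so as $\theta$ traverses the circle either both eigenlines return to themselves or they are interchanged, according as the sign of $F$ is preserved or reversed around $C_{S_i}$. The value $3$ can only occur in the eigenline-preserving case, where the two eigenlines are globally defined line subbundles $\ell_1,\ell_2\subset N$ with $N=\ell_1\oplus\ell_2$, $N$ being the normal bundle of the immersed circle $C_{S_i}$ in $\R P^3$. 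Hence $w_1(N)=w_1(\ell_1)+w_1(\ell_2)$ in $H^1(S^1;\Z/2)$. Since $\R P^3$ is orientable and $TC_{S_i}$ is trivial, $w_1(N)=0$, so $w_1(\ell_1)=w_1(\ell_2)$: the two eigenline bundles are either both trivial or both M\"obius, never exactly one of each. But $\pi=(+v_1\,-v_1)$ corresponds to $\ell_1$ M\"obius and $\ell_2$ trivial, which is now excluded, and symmetrically for $(+v_2\,-v_2)$. Therefore $3$ components cannot arise, and the count is $1$, $2$ or $4$.

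I expect the main obstacle to be the bookkeeping in the third step: justifying that the eigenline partition is well defined and globally tracked (so that ``eigenline-swapping'' is a meaningful dichotomy, controlled by the monodromy of the sign of $F$), and correctly matching the group-theoretic cycle types to the orientation data $w_1(\ell_1),w_1(\ell_2)$. Once this dictionary is in place, the single topological input $w_1(N)=0$ does the rest.
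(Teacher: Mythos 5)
Your proof is correct, and it is considerably more substantive than the paper's, which simply declares the lemma ``Obvious'' and points to the examples in Section \ref{examples}. You correctly identify that the only nontrivial content is excluding $3$ components, locate the monodromy $\pi$ in the centraliser of the antipodal involution $\alpha=(+v_1\,{-v_1})(+v_2\,{-v_2})$ in $S_4$ (the dihedral group of order $8$), and observe that $3$ orbits occur exactly for the two single transpositions, which in the eigenline-preserving case correspond to exactly one of $\ell_1,\ell_2$ being a M\"obius band; the relation $w_1(\ell_1)+w_1(\ell_2)=w_1(N)=0$ (from orientability of $\R P^3$ and triviality of $TC_{S_i}$) kills precisely that case, and in the eigenline-swapping case only $1$ or $2$ orbits are possible anyway. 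The one point worth spelling out a little more carefully is the dichotomy itself: the quadratic form on the normal plane is defined only up to multiplication by a nonvanishing (hence locally constant-sign) function, so the \emph{ordering} of the two eigenlines by eigenvalue is well defined only up to a global $\Z/2$ monodromy around $C_{S_i}$ --- your parenthetical about the sign of $F$ is the right idea, and since away from $\gamma$ the eigenvalues never coincide (and the paper's continuous extension across triple points and umbrellas keeps them separated), the eigenline pair is indeed globally tracked. What your argument buys over the paper's is an actual proof that $3$ components cannot occur, rather than an inference from a handful of examples; it also cleanly recovers the paper's subsequent lemma (four components when $S$ is orientable and the circle is a real--real intersection) as the case where both $\ell_i$ are trivialised by the distinguished normal framing.
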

\begin{proof}
Obvious, see Section \ref{examples} for some examples.
\end{proof}
\begin{lemma}
If the immersed surface $S$ is orientable and if generic points of $C_{S_i}$ come from the intersection of two real sheets then $P(C_{S_i})$ will consist of four connected components.
\end{lemma}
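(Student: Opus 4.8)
The plan is to reduce the count of connected components to a statement about global sections of the normal bundle, and then to exhibit four such sections explicitly from the orientation of $S$.

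First I would recall that over the generic part of $C_{S_i}$ the four strands of the pushoff are the four unit eigenvectors $\pm v_+,\pm v_-$ of the quadratic form $Q=\ell_A\ell_B$ attached to the two real sheets $\ell_A,\ell_B$ in the normal plane $N$. Since $Q$ is a product of two distinct real linear forms it is indefinite, so its two eigenvalues have opposite sign and never coincide along $C_{S_i}$ (in particular this part automatically avoids $\gamma$); hence the positive and negative eigendirections form two well-defined line fields $L_+,L_-$ in $N$, and the monodromy around $C_{S_i}$ cannot exchange them. The number of components of $P(C_{S_i})$ equals the number of orbits of this monodromy acting on $\{\pm v_+,\pm v_-\}$, so it suffices to show that each of $v_+,v_-$ extends to a global section, i.e. that $L_+$ and $L_-$ are orientable line bundles over the circle $C_{S_i}$.

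The key step is to produce a global nonvanishing eigenvector from the orientation. Orienting $S$ equips each local sheet with a unit normal, so to $p\in C_{S_i}$ we may attach the unordered pair $\{n_A(p),n_B(p)\}$ of normals of the two sheets through $p$, depending continuously on $p$ along the whole circle. Because the two sheets are real and transverse their tangent planes are distinct, so $n_A\neq -n_B$ and the vector
\[
 b(p)=\frac{n_A(p)+n_B(p)}{|n_A(p)+n_B(p)|}
\]
is a well-defined continuous unit vector field on $C_{S_i}$. A short computation in $N$ (with the normals symmetric about a bisector at angle $0$, so that $Q=\cos^2\tfrac{\delta}{2}\,x^2-\sin^2\tfrac{\delta}{2}\,y^2$) shows that $b$ is exactly the internal bisector of the sheet-lines $\ell_A,\ell_B$ and that $Q(b)>0$; that is, $b$ is a global section of $L_+$. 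Rotating by $\pi/2$ in $N$, oriented by the ambient orientation of $\R^3$ together with a chosen orientation of $C_{S_i}$, gives a global section $b^\perp$ of $L_-$. Thus $\pm b,\pm b^\perp$ are four globally defined, pairwise disjoint sections, and hence the four connected components of $P(C_{S_i})$; at the isolated non-generic points (e.g. triple points of real sheets) they agree with the continuous extension of the preceding lemma.

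The one point that needs care — and the only real obstacle I foresee — is that going once around $C_{S_i}$ might interchange the two sheets, equivalently that the two-fold cover of $C_{S_i}$ by its sheets is connected; this is precisely what could a priori make the monodromy nontrivial. The formula for $b$ circumvents it: since $n_A+n_B$ is symmetric under exchanging the two sheets, $b$ is insensitive to whether the sheets are globally interchanged, so it remains single-valued in that case as well. Hence orientability of $S$ forces four components regardless of the sheet monodromy, and no analysis of the (non-)triviality of that cover is required.
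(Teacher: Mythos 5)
Your proof is correct and follows essentially the same route as the paper's: the paper's (very terse) argument is precisely that the orientation-induced normal vectors of the two sheets distinguish one eigenvector independently of local choices, which is what your bisector $b=(n_A+n_B)/|n_A+n_B|$ makes explicit, including the key point that this expression is symmetric under exchanging the sheets and hence survives any sheet monodromy. Your version just supplies the details (the eigenvector identification, the sign separation of $L_+$ and $L_-$, and the extension through triple points) that the paper leaves implicit.
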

\begin{proof}Choose an orientation for the surface. At each generic point of this real self intersection the corresponding normal vectors will distinguish one eigenvector. Since this distinction does not depend on any local choice, all four components are separate.  \end{proof}

\section{The Fourfold Pushoff Invariant}\label{4foldpush}
In Subsection \ref{folddef} we define the the fourfold pushoff invariant and show that it coincides with the Brown invariant for generic immersed surfaces without solitary self intersection. In Subsection \ref{11proof} we restate and prove Theorem 1.1 from the introduction. In Subsection \ref{12proof} we restate and prove Theorem 1.2 from the introduction.

\subsection{Defining the Fourfold Pushoff Invariant}\label{folddef}
We wish to define a self linking number of an immersed circle $C_{S_i}$ in the self intersection using its pushoff $P(C_{S_i}$. In general, the linking number between two chains are not well defined unless one of the chains is zero homologous. While $C_{S_i}$ is not necessarily zero homologous, we know that $H_1(\R P^3)=\Z_2$ and we know that the pushoff $P(C_{S_i})$ is four times $C_{S_i}$ in the homology (since we have a natural fourfold projections to $C_{S_i}$  and thus zero homologous. Thus, the linking number $L(P(C_{S_i},C_{S_i})$ is well defined.

\begin{definition}
We define the \emph{fourfold pushoff linking number}, $T(C_{S_i})$ of an immersed circle $C_{S_i}$ in the self intersection as the linking number between $C_{S_i}$ and its pushoff $P(C_{S_i})$. 
\end{definition}

\begin{definition}
We define the \emph{fourfold pushoff invariant} $T(S')$ of surface $S'$ corresponding to a point  $p\in M_S\setminus (\sigma\cup\gamma)$ as $\sum_{i\in I} T(C_{S_i})$ where $S_i$ are immersed circles indexed by some set $I$ is an immersed circle in the self intersection of $S$.
\end{definition}
\begin{remark}
Note that the fourfold pushoff invariant takes the value zero on embeddings.
\end{remark}
\begin{remark}\label{coincide}
In the affine situation with no solitary self intersections the fourfold pushoff coincides with Kirby and Melvins definition of Browns invariant in \cite{KIME}.
 While Kirby and Melvins pushoff is defined by using the sheets intersected with the boundary of a tubular neighborhood of the self intersection, it is easy to construct an isotopy taking their pushoff to $P(C_{S_i}$ as illustrated by Figure \ref{kirmel}, depicting the normal plane, by simply rotating, using the fact that $\R P^3$ is orientable.

\begin{figure}
\includegraphics[scale=0.5]{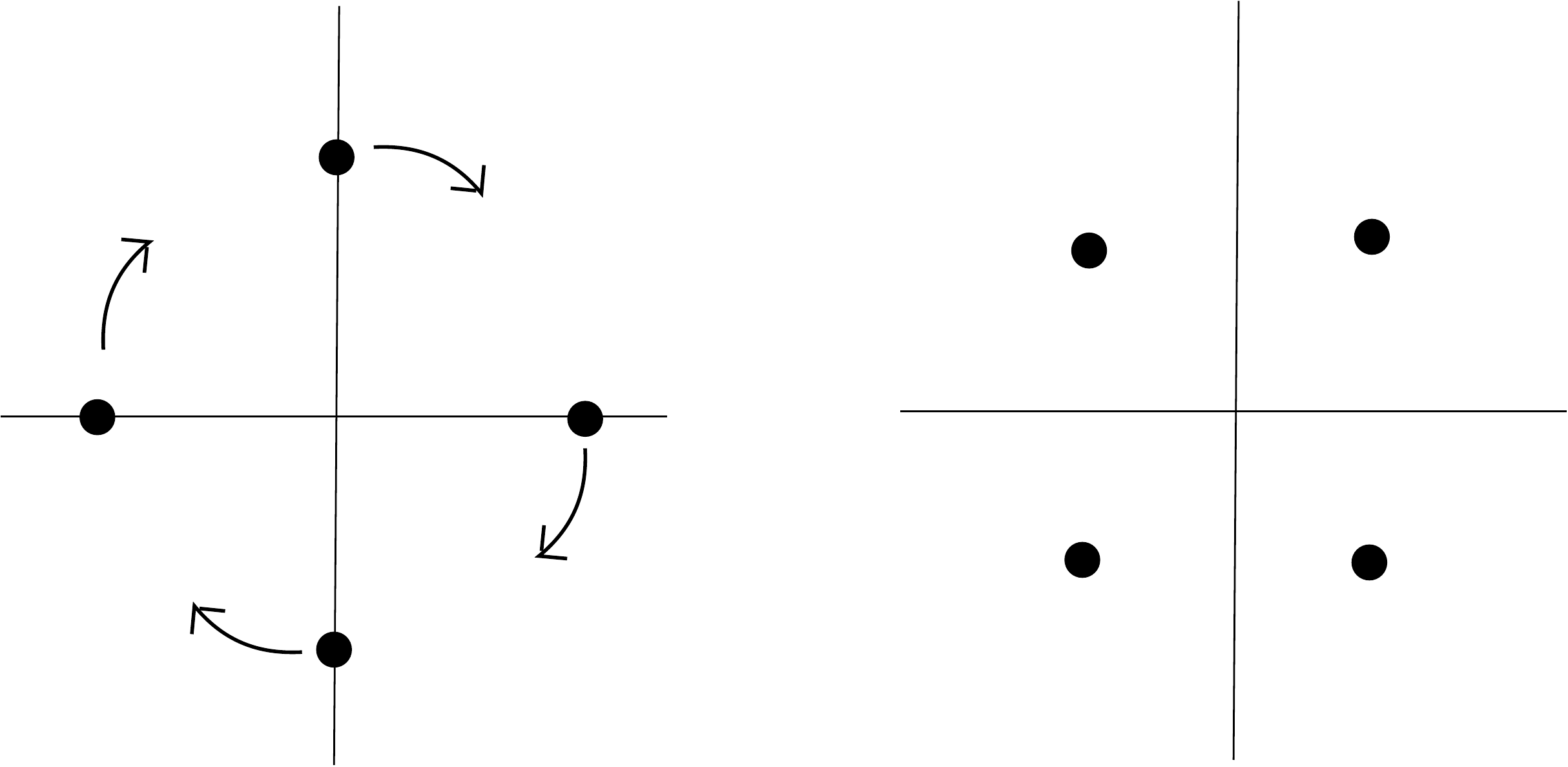}
\caption{On the lefthand side the pushoff is constructed in the normal plane using Kirby and Melvins definition, on the righthand side the pushoff is constructed using the definitions in this paper. The arrows indicate an isotopy taking the pushoffs to each other.}
\label{kirmel}
\end{figure} 
\end{remark}
\begin{remark}\label{metricdependant}
The fourfold pushoff invariant depends on the Riemann metric chosen. To see this we could change the metric around a solitary self intersection. The eigenvectors correspond the the axis vectors of the ellipses obtained by putting the quadratic form equal to some small $\epsilon$. By changing the metric locally we can rotate these ellipses, changing the value of the invariant. As the metric would change, so would $\gamma$. The value of the invariant would change just as the chosen parametrization passes through $\gamma$.
\end{remark}

\begin{theorem}
The fourfold pushoff invariant changes by $2$ when passing transversally through $\gamma$. 
\end{theorem}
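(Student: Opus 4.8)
The plan is to localise the entire change to a single arc of one immersed circle $C_{S_i}$ and to reduce the computation of $T(C_{S_i})$ to the rotation of the eigenframe. First I would note that crossing $\gamma$ transversally along a generic path $S_t$, $t\in(-\delta,\delta)$, disturbs the self intersection at only one point: at $t=0$ the quadratic form at a single point $p_0\in C_{S_i}$ becomes a nonzero multiple of the identity, while for $t\neq 0$ the four eigenvectors, and hence the pushoff $P(C_{S_i})$, are everywhere defined (by Lemma \ref{gamma} the degenerate points are discrete). The isotopy class of $C_{S_i}$ in $\R P^3$ and the geometric framing of its normal bundle coming from the ambient metric vary continuously in $t$ while taking values in a discrete set; so every contribution to the well-defined linking number $T(C_{S_i})=L(C_{S_i},P(C_{S_i}))$ of Subsection \ref{folddef}, except the one coming from the rotation of the eigenvectors, is constant across $t=0$. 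It therefore suffices to compute how the eigenframe rotation jumps.

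Next I would describe the eigenframe through the traceless part of the matrix of the form. Writing the restricted form in metric-orthonormal normal coordinates as $\lambda(x^2+y^2)+a(x^2-y^2)+2bxy$, the point $p_0\in\gamma$ is exactly where the pair $(a,b)$ vanishes, and the principal axis angle is $\theta=\tfrac12\arg(a+ib)$. Transversal crossing of $\gamma$ means precisely that the map $(t,s)\mapsto\bigl(a_t(s),b_t(s)\bigr)$, where $s$ parametrises $C_{S_i}$, is a local diffeomorphism near $(0,p_0)$. Consequently, for fixed $t\neq 0$ the loop $s\mapsto(a_t(s),b_t(s))$ is a closed curve in $\R^2\setminus\{0\}$ whose winding number about the origin jumps by $\pm 1$ as $t$ passes through $0$, since the arc near $p_0$ sweeps across the origin. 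Because the eigenframe angle is half the argument, its total rotation as $s$ runs once around $C_{S_i}$ equals $\pi$ times this winding number, and hence changes by exactly $\pm\pi$ across the crossing.

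Finally I would convert this into the change in linking number. The four unit eigenvectors sit at angles $\theta,\theta+\tfrac{\pi}{2},\theta+\pi,\theta+\tfrac{3\pi}{2}$ and rotate rigidly with $\theta$; scaling them by $\epsilon$ produces the strands of $P(C_{S_i})$. The $t$-dependent part of the linking number of each strand with $C_{S_i}$ equals $\tfrac{1}{2\pi}$ times the total rotation of that strand's angular coordinate relative to the (continuous) ambient framing, i.e. $\tfrac{1}{2\pi}$ times the total eigenframe rotation. Summing over the four strands, the varying part of $T(C_{S_i})$ is $4\cdot\tfrac{1}{2\pi}$ times the total eigenframe rotation, which by the previous step changes by $4\cdot\tfrac{1}{2\pi}\cdot(\pm\pi)=\pm 2$. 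As all remaining terms are constant, $T(C_{S_i})$, and hence $T(S')$, changes by $2$.

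The step I expect to demand the most care is the clean separation of $L(C_{S_i},P(C_{S_i}))$ into the jumping eigenframe-rotation term and the constant ambient/metric framing term, together with the bookkeeping when $P(C_{S_i})$ has fewer than four components. An odd winding number produces M\"obius-type monodromy of the eigenline bundle, so $\pm v_1$ and $\pm v_2$ may join into one or two strands rather than four; one must check that it is only the total winding sum of the strands that the linking number detects, so the factor $4\cdot\tfrac12$ survives regardless of the number of components. I would also want to verify that the homological setup of Subsection \ref{folddef} keeps $L$ well defined through the crossing, i.e. that $P(C_{S_i})$ remains $4[C_{S_i}]$ and hence null-homologous over $\Z_2$ for all $t$ near $0$.
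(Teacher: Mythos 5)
Your argument is correct and is in essence the same computation the paper performs: the paper simply exhibits the local model $zx^2+y^2+2\epsilon xy$ and asserts that examining it across $\epsilon=0$ gives the jump of $2$, which amounts to your observation that the principal-axis angle (half the argument of the traceless part $(a,b)$ of the form) picks up an extra net rotation of $\pm\pi$ as the arc of $(a,b)$-values sweeps across the origin, contributing $4\cdot\tfrac{1}{2\pi}\cdot(\pm\pi)=\pm2$ to the linking number. Your write-up supplies in general form the localization, the winding-number bookkeeping, and the caveat about the number of components of the pushoff, all of which the paper leaves implicit in its one-line appeal to the local model.
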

\begin{proof}
This can easily be seen by an examination of equation $zx^2+y^2+2\epsilon xy=$, modeling the crossing, where the sign of $\epsilon$ marks which side of the strata we are situated and $z=1,x=y=0$ is the point on the self intersection on which the singularity appears.
\end{proof}

The fourfold pushoff invariant changes by $2$ exactly when passing through $\gamma$. This can obviously only happen when two complex sheets intersect since the eigenvalues have different signs when two real sheets intersect.

\subsection{Proof of Theorem 1.1}\label{11proof}

We restate Theorem 1.1 presented in the introduction.
\begin{theoremstar}[1.1]
The fourfold pushoff invariant is constant on connected components of $M_S\setminus (\sigma\cup\gamma)$.
\end{theoremstar}
\begin{proof}
Obvious, since the pushoff varies continuously and is well defined on each component.
\end{proof}

\subsection{Proof of Theorem 1.2}\label{12proof}

We restate Theorem 1.2 presented in the introduction.

\begin{theoremstar}
Counted mod $8$, the fourfold pushoff invariant is constant on connected components of $M_S\setminus\gamma$.
\end{theoremstar}
\begin{proof}
From Theorem \ref{codim} and Lemma \ref{gamma} we know that $\sigma$ and $\gamma$ are of co-dimension $1$ in $M_S$. Given a path $P$ in a component of $M_S\setminus\gamma$ we can assume that $P$ passes through $\sigma$ transversally. It is then enough to show that the invariant does not change mod $8$ during such a passage. We examine these singular cases and then computing the invariant before and after the singularity. The co-dimension $1$ components of this discriminant are known from Theorem \ref{codim}. Each case can have several subcases depending on if the preimage under the projection is real or not. We recall the different situations. If we have four separate points in the preimage we we will locally have a generic intersection of four planes in a single point. This situation is denoted by $Q,Q'$ and $Q''$ depending on how many of these planes are real. If we have three distinct points, one of them is special and the other two come from two transversal sheets. These situations are denoted by $T$ and $T'$, again depending on if the two transversal sheets are complex conjugate or not. If we have two separate points, we either have a plane traveling through a Whitney umbrella which we denote by $C$, or two tangentially intersecting planes which we denote by $E,E'$ and $H,H'$ depending on if the tangency is hyperbolic or elliptic. If we have one point, we have two Whitney umbrellas colliding, either along the real self-intersection or the solitary self intersection. These are denoted by $B,K$.
For each of these subcomponents of $\sigma$ we examine the change of the fourfold pushoff.
\begin{itemize}
\item Four sheets intersecting at a single point, all of them real. This occurs when a real sheet passes through a triple point. Examining the self intersections, we see that their positions relative each other either does not change, or that one passes through another. If the two curves in the self intersection pass through each other, the invariant changes by $8$ or $0$ depending of if the curves are pieces of the same immersed circle or not. See Figure \ref{qsings}.

\begin{figure}
\includegraphics[scale=0.7]{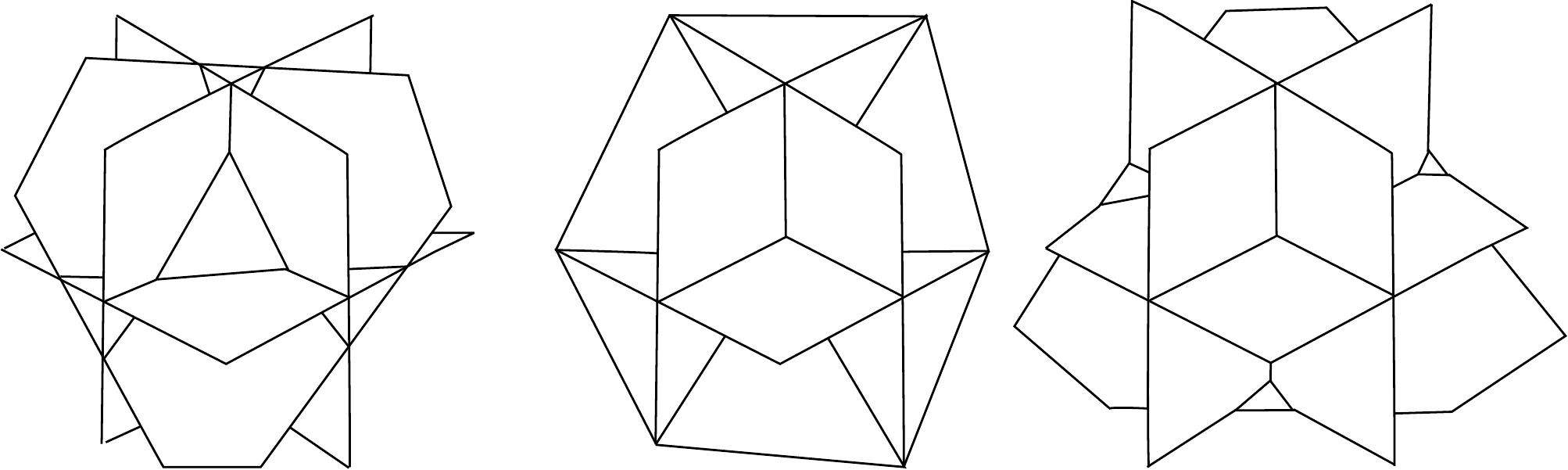}
\caption{A type $Q$ singularity}
\label{qsings}
\end{figure}

\item Four sheets intersecting, two real and two complex conjugate. This occurs when a solitary piece of the self intersection passes through a non solitary piece. The invariant changes by $8$ or $0$ depending of if the curves are pieces of the same immersed circle or not.
\item Four sheets intersecting, two pairs of complex conjugate sheets. Here two solitary pieces of the self intersection passes through each other, the invariant changes by $8$ or $0$ depending of if the curves are pieces of the same immersed circle or not.
\item Two triple points meet and annihilate each other along a real self intersection. See Figure \ref{tsings}. The invariant does not change.

\begin{figure}
\includegraphics[scale=0.7]{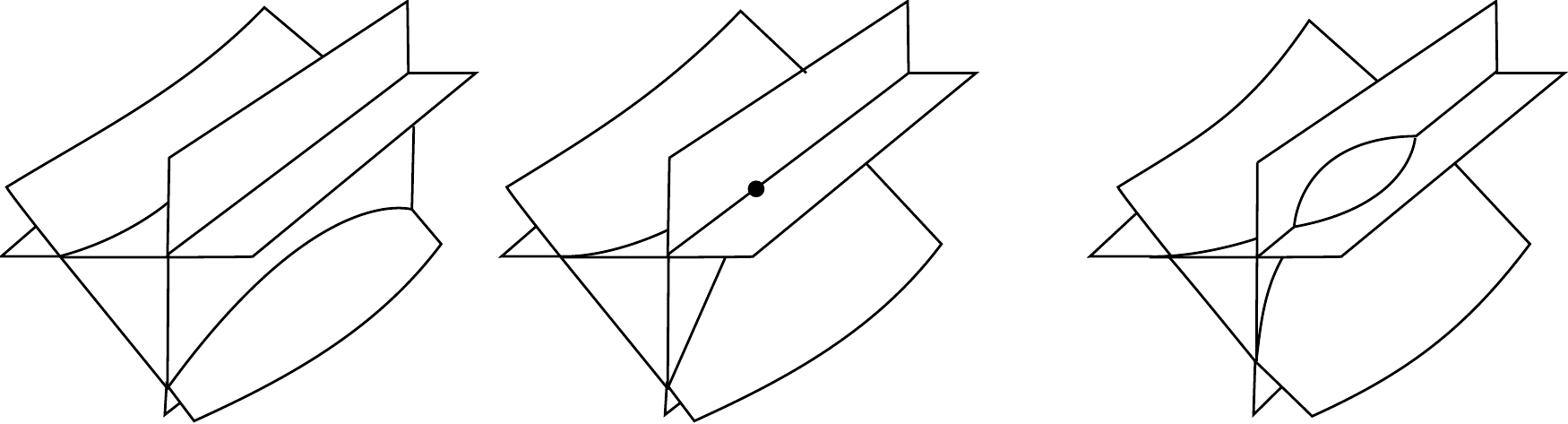}
\caption{A type $T$ singularity}
\label{tsings}
\end{figure} 

\item A solitary self intersection passes through a real plane, creating two triple points. The invariant does not change. 
\item Two real sheets intersect tangentially in a single point. See Figure \ref{esings}. The value of the invariant on the new circle is $0$.

\begin{figure}
\includegraphics[scale=0.7]{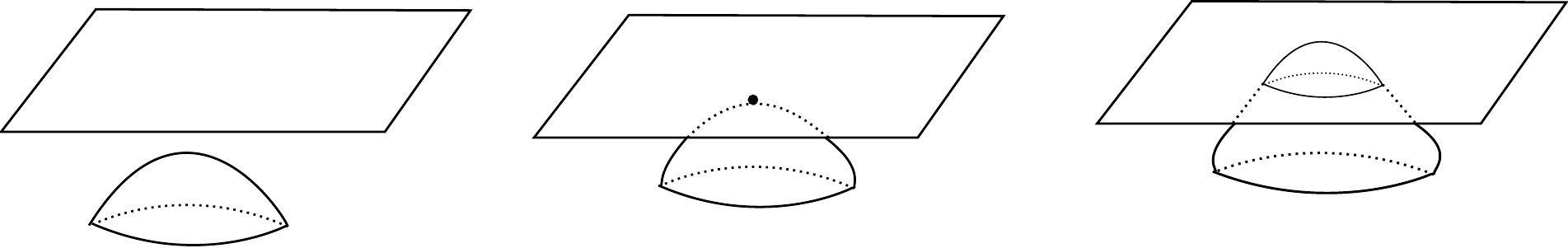}
\caption{A type $E$ singularity}
\label{esings}
\end{figure} 
\item Two complex conjugate tangent sheets intersect tangentially at a single point. The value of the invariant on the new circle is $0$.
\item Two sheets intersect as a plane passing through the surface defined by $z=x^2-y^2$. See Figure \ref{hsings}. If the two components of the self intersection are different before and after, it may have caused two different immersed circles to join together (or split apart). Globally, this changes the invariant by a multiple of $8$.
\begin{figure}
\includegraphics[scale=0.8]{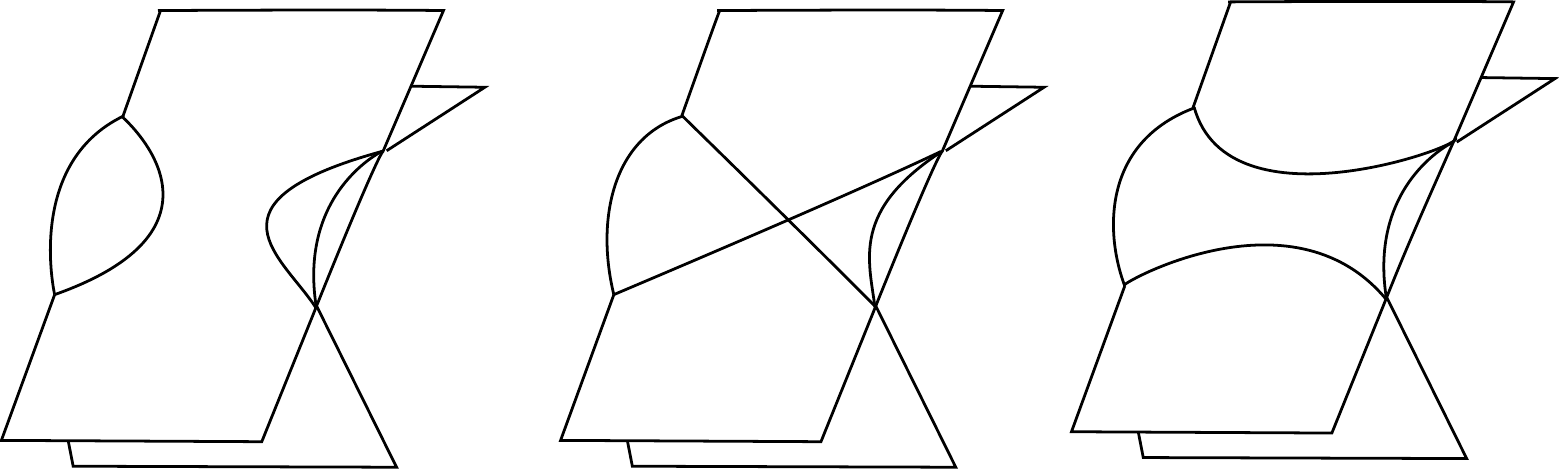}
\caption{A type $H$ singularity}
\label{hsings}
\end{figure} 
\item Two Whitney umbrellas collide and annihilate each other, either along a solitary self intersection or a real self intersection. See Figure \ref{bksings}. The invariant does not change.
\begin{figure}
\includegraphics[scale=0.8]{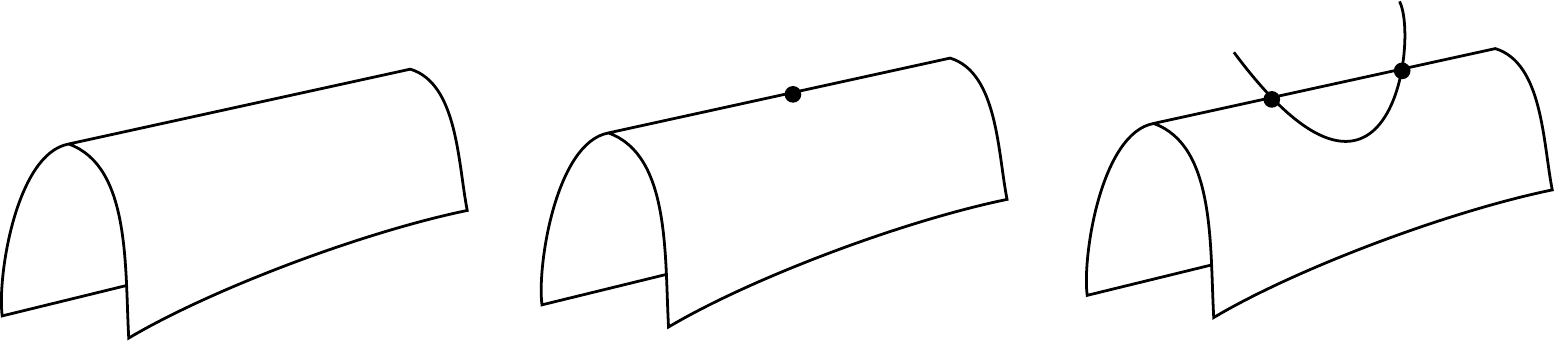}
\includegraphics[scale=0.8]{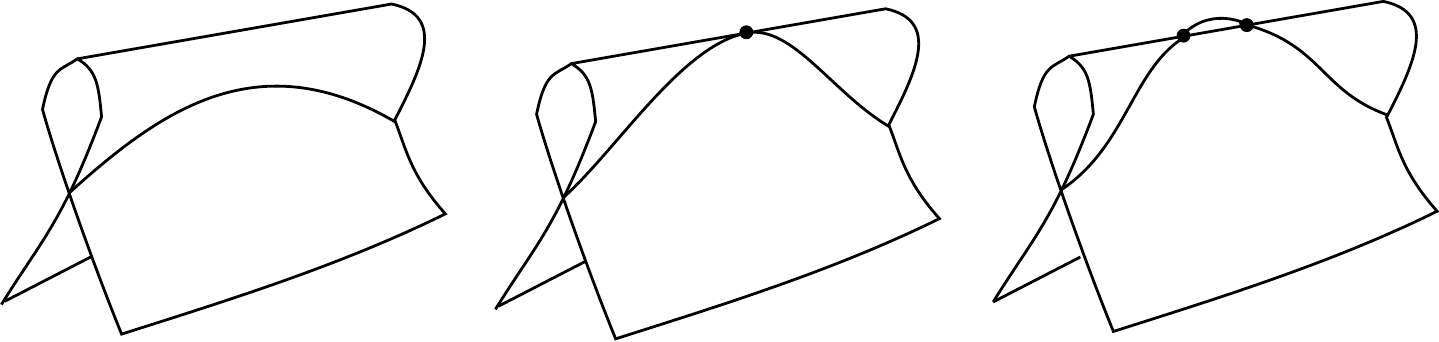}
\caption{A type $B$ singularity and a type $K$ singularity}
\label{bksings}
\end{figure}

\item A real plane passes through a Whitney umbrella. See Figure \ref{csings}. The invariant does not change.
\begin{figure}
\includegraphics[scale=0.8]{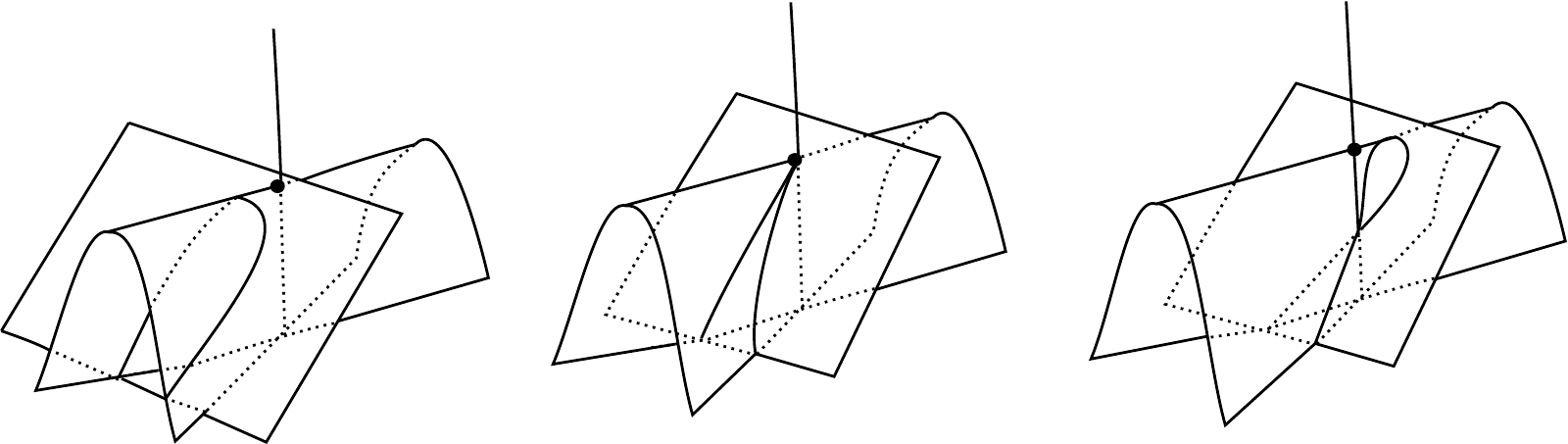}
\caption{A type $C$ singularity}
\label{csings}
\end{figure}

\end{itemize}
\end{proof}
\begin{remark}
Several of these co-dimension one passages have been studied for the smooth case by Goryunov \cite{GORY}, in our setting additional co-dimension one singularities arose from the additional structure from complex parts of the surface. We have followed his notation for the different kinds of singularities, using $'$ to denote variants arising from solitary/complex conjugate pieces.
\end{remark}

\section{Examples}\label{examples}
In this section we present some examples of surfaces together with the value of their fourfold pushoff invariant.
 
\begin{example}
The Roman surface is an example of a parametrized projective plane. It has 6 Whitney umbrellas and the fourfold pushoff takes value $0$. It is defined by the equation $x^2y^2+y^2z^2+z^2x^2=xyz$ in affine coordinates.
\end{example}

\begin{example}
The parameterization $x(t,s)=\frac{t^3s+ts^3}{t^4+s^4}, z(t,s)=\frac{2ts(t^2-s^2)}{t^4+s^4}$ parameterizes a curve looking like the symbol $\infty$. In affine coordinates we get a similar symbol by the solutions to the equation $y^4+y^2=x^2(1-x^2)$. By rotating it around a line in space we get a real surface parametrized by a torus such that its real self intersection is diffeomorphic to a circle (this surface will then be parametrized by degree 8). The pushoff of this circle will result in four circles, none of them linked to the self intersection so the invariant is zero.
\end{example}

\begin{example}
Take our earlier parametrization $x(s,t),z(s,t)$ of the $\infty$ symbol.  We want to rotate this parametrization by applying the following trick. We can parametrize a circle easily by using $p_1(u,v)=\frac{2uv}{u^2+v^2}, p_2(u,v)=\frac{u^2-v^2}{u^2+v^2}$. This allows us to consider $p_1,p_2$ as the $\sin$ and $\cos$ functions. They allow us to first apply a rotation matrix to the parametrization of the $\infty$ symbol (by just multiplying with the parametrization), then moving it to the side, and the rotating it around the $z-$axis as earlier. This will result in a degree $16$ surface which still has a circle as self intersection while the pushoff is linked, giving a value on the invariant of $\pm4$ depending on our choice of direction of rotation.
\end{example}

\begin{example}
The equation $x^4+y^4+(z^2+t^2)y^2-(z^2+t^2)x^2=0$ defines a surface having with two components in the pushoff of the self intersection. The value of the invariant is $0$.
\end{example}

\begin{example}
The equation $x^4+y^4+2(z^2+t^2)y^2+(z^2+t^2)x^2=0$ defines a surface with a real part consisting only of a solitary self intersection looking like the line defined by $x=y=0$. The value of the invariant is $0$.
\end{example}

\begin{example}
The equation $(t^2+z^2)y^2=x^2(x-z)(x-3z-t)$ defines a surface which has a self intersection located at the line defined by $x=y=0$. The self intersection consists of both solitary and non solitary parts. The value of the invariant is $0$.
\end{example}

\begin{example}
The equation $(t^2+z^2)y=\pm2tzx$ defines a surface which has a self intersection located at the line defined by $t=z=0$. The value of the invariant is $\pm1$.
\end{example}

\section*{acknowledgements}
I would like to thank Tobias Ekholm for the original idea and for many interesting discussions.


\begin{thebibliography}{MMMM}

\addtocounter{bibno}{1}
\bibitem{GORY}
V. Goryunov. Local invariants of mappings of surfaces into three-space,
`Arnold-Gelfand Mathematical Seminars. Geometry and Singularity Theory' (V.I. Arnold and I.M.Gelfand, eds.), Birkhäuser, 1996, 223-255. 
\addtocounter{bibno}{1}
\bibitem{KIME}
R.Kirby, P. Melvin. Local surgery formulas for quantum invariants and the Arf invariant,
Geom. Topol. Monogr. 7 (2004) 213-233
\addtocounter{bibno}{1}
\bibitem{VIRO}
O.Viro. Whitney Number of Closed Real Algebraic Affine Curve of Type I 
Moscow Mathematical Journal 6:1 (2006)
\addtocounter{bibno}{1}
\bibitem{KIRK}
C. A. Hobbs and N. P. Kirk, On the classification and bifurcation of multigerms of maps from surfaces to 3-space, Math. Scand., 89(2001), 57-96.

\bibitem{VIRO2}
O.Viro. Encomplexing the writhe,
[arXiv:math.GT/0005162v1]


\end{thebibliography}
\end{document}